\theoremstyle{theorem}
\newtheorem{theorem}{Theorem}[section]
\newtheorem{lemma}[theorem]{Lemma}
\newtheorem{proposition}[theorem]{Proposition}
\newtheorem{corollary}[theorem]{Corollary}
\newtheorem*{question*}{Question}
\theoremstyle{definition}
\newtheorem*{definition*}{Definition}
\newtheorem{definition}[theorem]{Definition}
\newtheorem*{example*}{Example}
\newtheorem{example}[theorem]{Example}
\newtheorem*{observation*}{Observation}
\newtheorem*{Goal*}{Goal}
\newtheorem*{Assumption*}{Assumption}
\theoremstyle{remark}
\newtheorem*{remark*}{Remark}
\newtheorem{remark}[theorem]{Remark}
\numberwithin{equation}{section}
\newcommand{\lda}{\lambda}
\newcommand{\Lda}{\Lambda}
\newcommand{\p}{\partial}
\newcommand{\C}{{\mathbb{C}}}
\newcommand{\R}{{\mathbb{R}}}
\newcommand{\Z}{{\mathbb{Z}}}
\newcommand{\N}{{\mathbb{N}}}
\DeclareMathOperator{\id}{id}
\DeclareMathOperator{\rk}{rk}
\DeclareMathOperator{\lcm}{lcm}
\DeclareMathOperator{\Ham}{Ham}
\DeclareMathOperator{\LFH}{LFH}
\DeclareMathOperator{\Symp}{Symp}
\DeclareMathOperator{\Fix}{Fix}
\DeclareMathOperator{\HW}{HW}
\DeclareMathOperator{\HF}{HF}
\DeclareMathOperator{\CF}{CF}
\DeclareMathOperator{\Ho}{H}
\DeclareMathOperator{\st}{st}
\DeclareMathOperator{\anti}{anti}
\DeclareMathOperator{\inv}{inv}
\DeclareMathOperator{\vol}{vol}
\begin{document}

\title[Volume growth in Milnor fibers]{Volume growth via real Lagrangians in Milnor fibers of Brieskorn polynomials}

\author{Joontae Kim}

\address{Department of Mathematics, Sogang University, 35 Baekbeom-ro, Mapo-gu, Seoul 04107, Republic of Korea}
\email{joontae@sogang.ac.kr}

\author{Myeonggi Kwon}

\address{Department of Mathematics Education, and Institute of Pure and Applied Mathematics, Jeonbuk National University, Jeonju 54896, Republic of Korea}
\email{mkwon@jbnu.ac.kr}

\begin{abstract}
    In this paper we study the volume growth in the component of fibered twists in Milnor fibers of Brieskorn polynomials. We obtain a uniform lower bound of the volume growth for a class of Brieskorn polynomials using a Smith inequality for involutions in wrapped Floer homology. To this end, we investigate a family of real Lagrangians in those Milnor fibers whose topology can be systematically described in terms of the join construction.
\end{abstract}

\maketitle


\section{Introduction}

A complex polynomial $f\colon \C^{n+1} \rightarrow \C$ of the form
$$
f({\mathbf z}) = z_0^{a_0}+ z_1^{a_1} +\cdots + z_n^{a_n},
$$
with $a_j \in \N$, is called a \emph{Brieskorn polynomial}. Its level set defines an exact symplectic manifold 
$$
V(\mathbf a) = V(a_0, a_1, \dots, a_n) = f^{-1}(1),
$$
called a \emph{Milnor fiber}, with respect to the standard symplectic form on $\C^{n+1}$. Taking the intersection with a  ball $B^{2n+2}$ in $\C^{n+1}$, the contact boundary
admits a canonical contact form whose Reeb flow is periodic.
As introduced in Seidel \cite[Section~4c]{Se00}, the periodic Reeb flow gives rise to a compactly supported symplectomorphism $\vartheta \in \Symp^c(V(\mathbf a))$, called a \emph{fibered twist}. For example, the fibered twist $\vartheta$ on the Milnor fiber $V(2, 2, \dots, 2) \cong T^*S^n$ of the quadratic polynomial is symplectically isotopic to the square of the generalized Dehn twist on the cotangent bundle $T^*S^n$ along the zero section. 

It is a driving question in symplectic topology to ask how much it differs from other fields, especially from smooth topology. The connected component $[\vartheta] \in \pi_0(\Symp^c(V(\mathbf a)))$ has been of particular interest as it has provided a fruitful source of examples of symplectomorphisms which are smoothly isotopic to the identity but not symplectically so. For example, it is proved in Seidel \cite[Section 4c]{Se00} that if the sum of the weights $\sum_{j} 1/a_j$ is not equal to one, then the component of fibered twists on $V(\mathbf a)$ has infinite order in $\pi_0  (\Symp^c(V(\mathbf a)))$. We refer the reader to \cite{AcuAvd16, ChiDinvan14, ChiDinvan16, KKL18, Oba22, Se00, Ul17} for further related results.

In this paper, we are interested in a quantitative way to detect the non-triviality of the component $[\vartheta] \in \Symp^c(V(\mathbf a))$ using a notion of entropy called \emph{volume growth}. In general, for a compactly supported diffeomorphism $\varphi$ on a Riemannian manifold $M$, the \emph{$i$-dimensional slow volume growth}  $s_i(\varphi)$ of $\varphi$ is defined as
$$
s_i(\varphi) = \sup_{\Delta \in \Sigma_i} \liminf_{m \rightarrow \infty} \frac{\log \vol (\varphi^m(\Delta))}{\log m} \in [0, \infty],
$$
where $\Sigma_i$ denotes the set of embedded $i$-dimensional cubes in $M$. This measures the complexity of the diffeomorphism on a polynomial scale by looking at how quickly the volume $\vol (\varphi^m(\Delta))$ of the image of an $i$-dimensional cube $\Delta$ increases under iterations of $\varphi$. 
An intriguing study of the slow volume growth in symplectic topology was made by Polterovich \cite{Pol02} who found a uniform lower bound of $s_1(\varphi)$ for every symplectomorphism $\varphi\neq \id$ on a closed symplectic manifold with vanishing second homotopy group.




The main result of this paper is to obtain a uniform lower bound of the half dimensional slow volume growth $s_n(\varphi)$ in the component of fibered twists $\vartheta$ on a family of Milnor fibers of Brieskorn polynomials:

\begin{theorem}\label{thm: theorem A}
Let $V(\mathbf{a}) = V(a_0, a_1, \dots, a_n)$ be a Milnor fiber of a Brieskorn polynomial with $n \geq 3$ such that $a_j = 2$ for at least three $a_j$'s 
and at most one $a_j$ is odd. Let $\vartheta$ be a fibered twist on $V(\mathbf{a})$. Then $s_n(\varphi) \geq 1$ for any compactly supported symplectomorphism $\varphi$ such that $[\varphi] = [\vartheta^k] \in \pi_0(\Symp^c(V(\mathbf{a})))$ for some $k \neq 0$.
\end{theorem}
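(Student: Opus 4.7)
The plan is to construct a suitable real Lagrangian $L \subset V(\mathbf{a})$, use a Smith inequality to lower-bound the growth of its filtered wrapped Floer homology, and finally transfer this growth into a slow volume growth estimate. First, I would look for an anti-symplectic involution $\sigma$ on $V(\mathbf{a})$ whose fixed-point set $L$ is a compact, connected, orientable Lagrangian with controlled topology. A natural family comes from coordinate-wise involutions of the form $z_j \mapsto \zeta_j \bar z_j$ with appropriate roots of unity $\zeta_j$. Under the standing hypothesis (at least three $a_j$ equal to $2$, at most one $a_j$ odd), I would verify that such an $L$ admits a clean description as an iterated join of spheres and elementary pieces, following the general pattern that the Brieskorn Milnor fiber itself is homotopically a join; this join presentation lets me compute the homotopy type and the $\F_2$-Betti numbers of $L$ and of its ideal Legendrian boundary $\partial L \subset \partial V(\mathbf{a})$.

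Next, I would invoke a Smith inequality for the involution $\sigma$ in the wrapped Floer setting, in the style of Seidel--Smith and Hendricks. Concretely, this should give an action-filtered bound that lower-bounds $\dim_{\F_2} \HW^*_{[0,T]}(L; \F_2)$ by the rank of the positive part of the filtered symplectic cohomology $\SH^*_{[0,T]}(V(\mathbf{a}); \F_2)$ of the ambient Milnor fiber. For Brieskorn Milnor fibers, $\SH^*$ is generated in positive action by Reeb orbits of the periodic boundary flow, and its filtered rank grows at least linearly in $T$; together with the Smith inequality this yields $\dim_{\F_2} \HW^*_{[0,T]}(L; \F_2) \geq c\, T - C$, i.e.\ the number of Reeb chords of $\partial L$ with action below $T$ grows at least linearly in $T$.

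Finally, I would apply the now-standard principle (following Frauenfelder--Schlenk and Macarini--Meiwes) that a polynomial growth rate of degree $d \geq 0$ of the filtered wrapped Floer homology of a Lagrangian forces the slow volume growth $s_n(\varphi) \geq d$ for any compactly supported symplectomorphism $\varphi$ in the isotopy class $[\vartheta^k]$ with $k \neq 0$. The reason is that the filtered $\HW$ rank lower-bounds the number of intersections of $\vartheta^{km}(L)$ with $L$ inside a region controlled by $m$, and the volume of $\vartheta^{km}(L)$, or of any embedded $n$-cube in it, in turn grows at least as fast as this intersection count. Linear growth then gives $s_n(\varphi) \geq 1$.

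The main obstacle I anticipate is the filtered Smith inequality step: one must check that the wrapped Floer data for $L$ can be chosen $\sigma$-equivariantly and compatibly with the action filtration, so that the equivariant rank inequality descends to action-truncated complexes. This uses the localization and equivariant Floer machinery developed by Seidel--Smith and Hendricks, adapted to the wrapped/open-string setting. The join-theoretic computation of $L$ is essentially combinatorial once the hypothesis on the $a_j$'s is exploited, and the Floer-to-volume transfer in the last step, while nontrivial, follows an established pattern.
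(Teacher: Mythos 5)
Your high-level scaffolding---real Lagrangians coming from coordinate-wise anti-symplectic involutions, a Smith-type inequality, and then a Crofton/Lagrangian-tomograph argument to convert Floer growth into slow volume growth---matches the paper, and the join description of the real Lagrangians is exactly the one the paper proves. But the crucial middle step, the Smith inequality, is not the one the paper uses and in the form you state it I do not see that it works. You propose to apply a Smith inequality to the \emph{anti-symplectic} involution whose fixed locus is $L$, and to bound $\dim_{\F_2}\HW^{a}(L)$ from below by the filtered symplectic cohomology of the ambient $V(\mathbf{a})$. That is neither the Seidel--Smith inequality, which is for a \emph{symplectic} involution and compares Lagrangian Floer groups with those of the fixed locus, nor any inequality I know of between $\HW$ of a real Lagrangian and $\SH$ of the ambient in the direction you need; nothing in your setup forces $\HW^a(L)$ to grow as fast as the Reeb orbit count.

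What the paper actually does is introduce a second, \emph{symplectic} involution $\sigma({\mathbf z})=(z_0,\dots,z_{n-1},-z_n)$, well defined when $a_n$ is even, whose fixed-point set is the lower-dimensional Milnor fiber $S=W(a_0,\dots,a_{n-1})$. The Seidel--Smith inequality (applied Hamiltonian-by-Hamiltonian and compatibly with the action filtration) then gives $\dim\HW^a(L)\geq\dim\HW^a(L\cap S)$, so $\Lambda(L)\geq\Lambda(L\cap S)$. This is iterated in the even exponents until one lands on the base case $W(k+1,2,2,2)$, where the linear growth of $\HW^a$ of a cotangent-fiber-type real Lagrangian is a known explicit computation from KKL18. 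The hypotheses of the theorem are exactly what make this reduction run: ``at least three $a_j=2$'' provides the base case, and ``at most one $a_j$ odd'' ensures that enough coordinates can be flipped by symplectic involutions while the surviving real Lagrangian at each stage remains contractible (via the join picture), which is what makes the stably-trivial-normal-structure hypothesis of Seidel--Smith checkable in the style of Hendricks. Your proposal contains no base case and no concrete mechanism producing linear growth, and it flags the normal-structure verification as an ``anticipated obstacle'' without resolving it, whereas that verification---using contractibility of the invariant Lagrangian and $c_1=0$ for Milnor fibers---is a substantive part of the argument.

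A smaller point: the Floer-to-volume transfer in the paper goes through the Crofton inequality for Lagrangian tomographs of Cineli--Ginzburg--G\"urel, in the relative form of Baracco--Leclercq; this is in the same spirit as Frauenfelder--Schlenk but is what lets one drop the previous hypothesis that $L$ be diffeomorphic to a ball.
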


This generalizes the result in \cite[Section~7]{KKL18} for fibered twists on Milnor fibers of $A_k$-type polynomials i.e. the case when $a_0 = k+1$ with $k \geq 1$ and $a_j = 2$ for $1 \leq j \leq n$.

The uniform lower bound is particularly interesting when (a power of) a fibered twist $\vartheta$ acts trivially on homology. 
In this case one cannot find such a uniform lower bound from topological information e.g. in the sense of Yomdin \cite[Theorem~1.1]{Yom87}. Therefore it can be seen as a symplectic phenomenon. For example, a fibered twist on the Milnor fiber $V(3, 4, 2, 2, 2, 2)$, which satisfies the assumptions in Theorem~\ref{thm: theorem A}, indeed acts trivially on homology, see Remark~\ref{rem: 34222}. In addition, for $A_{k}$-type polynomials with $n$ even, a power of a fibered twist $\vartheta$ is smoothly isotopic to the identity, as noted in \cite[Remark~7.2]{KKL18}. The proof of Theorem~\ref{thm: theorem A} also implies that the component of fibered twists has infinite order in $\pi_0  (\Symp^c(V(\mathbf a)))$, see Remark \ref{rem: infinite order}.




The geometric idea of the result goes back to a seminal work of Frauenfelder--Schlenk \cite{FraSch05}. They obtained a uniform lower bound of the slow volume growth $s_n(\varphi)$ in the component of fibered twists on a certain class of cotangent bundles  by studying intersections of Lagrangian submanifolds. More precisely, $s_n(\varphi)$ is bounded from below by the volume growth of a cotangent fiber $L$ under iteration of $\varphi$, and this Lagrangian volume growth can be detected by an algebraic growth rate of the dimension of the Lagrangian (intersection) Floer homology of $L$ and $\varphi^m(L)$. Invariance properties of Lagrangian Floer homology play an essential role here to obtain a uniform lower bound. 

To prove Theorem~\ref{thm: theorem A}, we generalize this idea to Liouville domains $W$, with $\Ho_c^1(W; \R) = 0$,  whose Reeb flow on the boundary is periodic: 


\begin{theorem}\label{thm: intro_volumegrowthviagrowthrate}
If there exists an admissible Lagrangian $L \subset W$ whose linear growth rate $\Lambda(L)$ is positive, then for any compactly supported symplectomorphism $\varphi$ with $\varphi \in [\vartheta^k] \in \pi_0(\Symp^c(\widehat W))$ for some $k \neq 0$, we have $s_n(\varphi) \geq  1$.
\end{theorem}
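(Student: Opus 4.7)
The plan is to follow the Frauenfelder--Schlenk strategy \cite{FraSch05} in the Liouville setting. First, I would reduce the $n$-dimensional slow volume growth $s_n(\varphi)$ to a Lagrangian volume growth for $L$. Since $\dim L = n$, an embedded $n$-dimensional cube $\Delta \subset L$ is a valid test object in the definition of $s_n$, so in order to conclude $s_n(\varphi) \geq 1$ it suffices to exhibit a single $\Delta$ with $\vol(\varphi^m(\Delta)) \gtrsim m$. Working with a Riemannian metric compatible with the Liouville form, this reduces to
$$
\vol\bigl(\varphi^m(L) \cap K\bigr) \gtrsim m
$$
on a fixed compact set $K$ containing the relevant geometry.

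Second, I would apply a Crofton--type isoperimetric inequality (Macarini--Schlenk, also used in \cite{FraSch05}): after a $C^\infty$-small transverse perturbation, there is a uniform constant $C_1 > 0$ with
$$
\vol\bigl(\varphi^m(L) \cap K\bigr) \geq C_1 \cdot \#\bigl(L \cap \varphi^m(L) \cap K\bigr).
$$
Since the intersection points generate the Floer chain complex, this intersection number dominates the total rank of the Lagrangian intersection Floer homology, yielding
$$
\vol\bigl(\varphi^m(L) \cap K\bigr) \geq C_1 \cdot \dim \HF_*\bigl(L, \varphi^m(L)\bigr).
$$

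Third, I would invoke isotopy invariance. The assumption $\Ho^1_c(W; \R) = 0$ promotes any compactly supported symplectic isotopy to a Hamiltonian one, so the hypothesis $[\varphi] = [\vartheta^k] \in \pi_0(\Symp^c(\widehat W))$ gives
$$
\HF_*\bigl(L, \varphi^m(L)\bigr) \cong \HF_*\bigl(L, \vartheta^{km}(L)\bigr).
$$
The fibered twist $\vartheta$ is generated by the periodic Reeb flow of minimal period $T_0$, so iterating it shifts the action filtration by $km T_0$; the generators of $\HF_*(L, \vartheta^{km}(L))$ then correspond, up to a defect bounded uniformly in $m$, to the chord generators of the action-filtered wrapped Floer homology $\HW_*^{\leq |k| m T_0}(L)$. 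Positivity of the linear growth rate $\Lambda(L) > 0$ yields $\dim \HW_*^{\leq |k| m T_0}(L) \geq c \cdot m$ for some $c > 0$ and all $m$ large. Chaining the three bounds gives $\vol(\varphi^m(L) \cap K) \geq C_2 \cdot m$ and hence $s_n(\varphi) \geq 1$.

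The main obstacle I anticipate is the third step: identifying $\HF_*(L, \vartheta^{km}(L))$ with a finite-action piece of $\HW_*(L)$ and controlling the defect uniformly in $m$, while $L$ is allowed to be non-compact and admissible and $\vartheta^{km}$ is only defined on the completion $\widehat W$. Admissibility of $L$, periodicity of the Reeb flow on $\partial W$, and the vanishing $\Ho^1_c(W;\R) = 0$ (which is what upgrades symplectic to Hamiltonian isotopies and thereby makes the relevant Floer homology genuinely invariant) all enter crucially at this point.
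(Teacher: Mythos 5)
Your overall plan --- a Crofton inequality, a Floer-theoretic lower bound on intersection numbers, Hamiltonian invariance to replace $\varphi^m$ by $\vartheta^{mk}$, and positivity of $\Lambda(L)$ --- is indeed the route the paper takes, going back to Frauenfelder--Schlenk. But your second step contains a genuine gap. As written, the inequality
$$
\vol\bigl(\varphi^m(L)\cap K\bigr) \;\ge\; C_1 \cdot \#\bigl(L\cap\varphi^m(L)\cap K\bigr)
$$
with a constant $C_1$ uniform in $m$ is false for a single pair of Lagrangians, even after a $C^\infty$-small transverse perturbation: $\varphi^m(L)$ could track $L$ inside a thin tube and accumulate arbitrarily many intersection points while carrying negligible volume. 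A Crofton inequality is an \emph{averaged} statement: one needs an entire family $\{L_s\}_{s\in B}$ of admissible Lagrangians parametrized by a ball $B$ of sufficiently large dimension, so that
$$
\int_B N_m(s)\,ds \;\le\; C\cdot \vol\bigl(\varphi^m(L)\bigr),\qquad N_m(s) := \#\bigl(L_s\cap\varphi^m(L)\bigr),
$$
together with the facts that each $L_s$ is Hamiltonian isotopic to $L$ and transverse to $\varphi^m(L)$ for almost every $s$, whence $N_m(s)\ge\dim\HF(L,\varphi^m(L))$ for a.e.\ $s$, and then one integrates. This family is precisely the Lagrangian tomograph of \cite{CiGiGu21}, in the variant of \cite{BaLe22}, and it is the paper's central new input: it is what lets $L$ be an \emph{arbitrary} admissible Lagrangian rather than a disk, extending \cite[Theorem~A]{KKL18}. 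Your proposal has no family and no averaging, and without them the volume bound does not follow.

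Two further points you leave implicit in step three: you must insert a positive isotopy $\varphi_{\mathrm{pos}}$ pushing $\varphi^m(L)$ off $L$ near $\partial W$, so that the completed Lagrangians are disjoint on the symplectization end and $\HF\bigl(L,(\varphi_{\mathrm{pos}}\circ\vartheta^{mk})(L)\bigr)$ is canonically identified with $\HW^{mkT_P+\epsilon}(L)$ for an admissible Hamiltonian of slope $mkT_P+\epsilon$; and your $|k|$ silently hides the case $k<0$, which requires a separate argument via Poincar\'e duality, $\HW^{(-\infty,a)}(L)\cong\HW_{(-a,\infty)}(L)$, because the positive isotopy pushes in only one direction along the Reeb flow.
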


Here, the linear growth rate $\Lambda(L)$ of an admissible Lagrangian $L$ measures how quickly the dimension of the filtered wrapped Floer homology $\HW^a(L)$ increases along the action filtration; see Section~\ref{sec: growthrate} for a precise definition. 

It is worth pointing out that Theorem~\ref{thm: intro_volumegrowthviagrowthrate} is proved in \cite[Theorem~A]{KKL18} when the admissible Lagrangian $L$ is diffeomorphic to the $n$-ball. Manipulating the Crofton inequality for Lagrangian tomographs, which was recently introduced in \cite{CiGiGu21}, we give a proof of Theorem~\ref{thm: intro_volumegrowthviagrowthrate} without any assumption on the diffeomorphism type of $L$, see Section~\ref{sec: volumegrwothoffiberedtwists}.

A crucial step of the proof of Theorem~\ref{thm: theorem A} is to find admissible Lagrangians in Milnor fibers of positive growth rate. For this, we employ the canonical $\Z_{a_0} \oplus \Z_{a_1} \oplus \cdots \oplus \Z_{a_n}$-symmetry on the Milnor fiber $V(\mathbf a) = V(a_0, a_1, \dots, a_n)$ given by
\begin{equation}\label{eq: syminV}
(m_0, m_1, \dots, m_n) \cdot {\mathbf z} = (e^{{m_0 2\pi i}/a_0}z_0, e^{{m_1 2\pi i}/a_1}z_1, \dots, e^{{m_n 2\pi i}/a_n}z_n)
\end{equation}
for $0 \leq m_j \leq a_j-1$. Observe that complex conjugation
$$
\mathcal{R}(z_0, z_1, \dots, z_n) = (\overline z_0, \overline z_1, \dots, \overline z_n)
$$
defines an anti-symplectic involution on $V(\mathbf a)$.
Composing $\mathcal{R}$ with the symmetry \eqref{eq: syminV}, we can produce a family of anti-symplectic involutions $\mathcal{R}_{\mathbf m} = \mathcal{R}_{(m_0, m_1, \dots, m_n)}$ on $V(\mathbf a)$, and hence we obtain a family of real Lagrangians $L_{\mathbf{m}} = \Fix \mathcal{R}_{\mathbf m}$ in $V(\mathbf a)$.
When~$n=0$, we denote $L_{m}=\Fix \mathcal{R}_{m}$ in $V(a)$.
See Section~\ref{sec: realLag} for a more detailed description.
These real Lagrangians turn out to be particularly useful for several reasons, and one of them is that their topology can be explicitly understood as follows.
\begin{theorem}\label{thm: realjoinintro}
The real Lagrangian $L_{\mathbf{m}} = L_{(m_0, \dots, m_n)}$ is homotopy equivalent to the join $L_{m_0} * L_{m_1} * \cdots * L_{m_n}$ i.e.
$$
L_{\mathbf{m}} \simeq L_{m_0}* L_{m_1} * \cdots * L_{m_n}.
$$
In particular, $L_{\mathbf m}$ is either contractible or homotopy equivalent to the $k$-dimensional sphere $S^k$ for some $0 \leq k \leq n$ if it is nonempty.
\end{theorem}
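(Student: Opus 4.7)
The plan is to make $L_{\mathbf m}$ explicit as a real algebraic hypersurface and then identify it with the iterated join by a case analysis on the parities of the exponents $a_j$ and twists $m_j$. First I compute $\Fix \mathcal R_{\mathbf m}$ directly: writing $z_j = r_j e^{i\theta_j}$, the equation $z_j = e^{2\pi i m_j/a_j}\overline{z_j}$ forces $\theta_j \equiv \pi m_j/a_j \pmod{\pi}$, so every fixed point has the form $z_j = t_j e^{i\pi m_j/a_j}$ with $t_j \in \R$, and substituting in $\sum z_j^{a_j}=1$ gives the real model
\[
L_{\mathbf m} \;\cong\; \Bigl\{(t_0,\dots,t_n) \in \R^{n+1} \;:\; \sum_{j=0}^n (-1)^{m_j}\, t_j^{a_j} = 1 \Bigr\}.
\]
Applied with $n=0$, this sorts each $L_{m_j}$ into one of three types: (P) both $a_j$ and $m_j$ even, so $L_{m_j} \cong S^0$; (B) $a_j$ odd, so $L_{m_j}$ is a single point; (C) $a_j$ even and $m_j$ odd, so $L_{m_j} = \emptyset$. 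I partition $\{0,\dots,n\} = P \sqcup B \sqcup C$.

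If $B \neq \emptyset$, pick $j^{*} \in B$. Since $a_{j^*}$ is odd, the map $t \mapsto (-1)^{m_{j^*}} t^{a_{j^*}}$ is a homeomorphism $\R \to \R$, so the projection $L_{\mathbf m} \to \R^{n}$ forgetting the $j^{*}$-th coordinate is a homeomorphism and $L_{\mathbf m}$ is contractible. On the join side $L_{m_{j^*}}$ is a point, hence $L_{m_0} \ast \cdots \ast L_{m_n}$ is a cone and also contractible; the two sides are therefore homotopy equivalent.

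If $B = \emptyset$, every $a_j$ is even. When $C \neq \emptyset$, I deformation retract $L_{\mathbf m}$ onto the slice $\{t_j = 0 : j \in C\}$ by
\[
r_{\tau}(t)_j \;=\; \begin{cases} (1-\tau)\, t_j & j \in C, \\ \lambda(\tau)^{1/a_j}\, t_j & j \in P, \end{cases} \qquad \lambda(\tau) \;:=\; \frac{1 + \sum_{j \in C}(1-\tau)^{a_j} t_j^{a_j}}{\sum_{j \in P} t_j^{a_j}},
\]
where on $L_{\mathbf m}$ the denominator equals $1 + \sum_{j \in C} t_j^{a_j} \geq 1$, so $\lambda$ is smooth and positive throughout. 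A direct substitution shows $r_\tau(L_{\mathbf m}) \subset L_{\mathbf m}$, $r_0 = \id$, and $r_1$ lands on $L_{\mathbf m'}$ for the sub-Brieskorn polynomial indexed by $P$; using $X \ast \emptyset = X$, the join $L_{m_0} \ast \cdots \ast L_{m_n}$ likewise collapses to the join over $P$. In the remaining case $P = \{0,\dots,n\}$, the map $(t_j) \mapsto \bigl[(t_0^{a_0}, \sign t_0);\, \dots;\, (t_n^{a_n}, \sign t_n)\bigr]$ is a direct homeomorphism from $L_{\mathbf m} = \{\sum t_j^{a_j}=1\}$ onto $S^0 \ast \cdots \ast S^0 \cong S^n$, with inverse $(s_j,\ell_j) \mapsto \ell_j s_j^{1/a_j}$.

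Finally, the ``in particular'' clause follows by inspection of the join: if some $L_{m_j}$ is a point the join is a cone hence contractible, and otherwise every nonempty $L_{m_j}$ is $S^0$, so using $X \ast \emptyset = X$ the join becomes $S^k$ with $k+1$ the number of nonempty factors, giving $0 \leq k \leq n$ whenever $L_{\mathbf m}$ is nonempty. The main technical point is verifying that the deformation retract preserves $L_{\mathbf m}$ and remains continuous up to $\tau = 1$, which reduces to the strict positivity of $\sum_{j \in P} t_j^{a_j}$ on $L_{\mathbf m}$ when $P \neq \emptyset$.
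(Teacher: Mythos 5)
Your proof is correct, and it reaches the same conclusion by a somewhat different route.  The paper first applies Oka's deformation retraction onto the single locus $\widetilde L_{\mathbf m}=\{z\in L_{\mathbf m} : z_j^{a_j}\geq 0 \text{ for all }j\}$, and only then splits into the two cases \emph{all $L_{m_j}$ nonempty} versus \emph{some $L_{m_j}$ empty} before writing down a join homeomorphism.  You instead pass to the real model $\{\sum_j(-1)^{m_j}t_j^{a_j}=1\}\subset\R^{n+1}$ at the outset and run a three-way case split on the parities of $(a_j,m_j)$.  The most genuinely different step is your treatment of the case $B\neq\emptyset$: projecting off an odd-exponent coordinate $j^*$ gives a homeomorphism $L_{\mathbf m}\cong\R^n$ directly, so contractibility falls out without ever invoking the join — a nice shortcut over the paper's cone-point observation.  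Your retraction for $B=\emptyset,\ C\neq\emptyset$ is essentially the paper's Oka retraction written in real coordinates and specialized to that case, and your final map $(t_j)\mapsto[\sign t_0,t_0^{a_0};\dots]$ is the real-coordinate version of the paper's $\phi(\mathbf z)=[w_0,t_0,\dots]$ in its Case 1.  What the paper's organization buys is a single uniform retraction Lemma plus a clean identification of the retract with the join; what yours buys is an explicit real-algebraic picture (sheds complex notation entirely) and a slick dodge when some exponent is odd.  One small gap: when $B=\emptyset$ and $P=\emptyset$ (all indices in $C$), your positivity argument for $\lambda$ doesn't apply, but then $L_{\mathbf m}=\emptyset$ and the join of empty sets is also empty, so this limiting case should be noted and dispatched explicitly rather than left implicit.
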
 
The real Lagrangian $L_{\mathbf m}$ can be seen as a real variety of Brieskorn type singularities, which might be of independent interest. The above theorem can be regarded as a real analogue of a classical result in singularity theory by Sebastiani--Thom \cite{SebTho71}; see also Oka \cite{Oka73}.

One of the real Lagrangians in $A_k$-type Milnor fibers $V(k+1, 2, \dots, 2)$, given by the anti-symplectic involution ${\mathbf z} \mapsto (\overline z_0, -\overline z_1, -\overline z_2,  \dots , -\overline z_n)$, is intensively studied in \cite{BaKw21, KKL18}. Under an identification of $V(k+1, 2, \dots, 2)$ with the $k$-linear plumbing of $T^*S^n$, this Lagrangian corresponds to (two copies of) a cotangent fiber \cite[Section~3.3.3]{BaKw21}. It is shown in \cite[Section~7]{KKL18} that it has positive linear growth rate via explicit computation of the filtered wrapped Floer homology. This was possible as the differentials for the chain complex vanish just for degree reasons. For general real Lagrangians, however, one can hardly expect to compute the differentials for the filtered wrapped Floer homology. 

In this paper, we instead exhibit a more general and structural approach to detect the growth rate. Namely, we manipulate a Smith inequality for involutions in wrapped Floer homology over $\Z_2$-coefficients. This is deduced from a localization principle in $\Z_2$-equivariant Lagrangian Floer homology introduced by Seidel--Smith \cite{SS}.
Observe that a Milnor fiber $V(\mathbf a ) = V(a_0, a_1, \dots, a_n)$, with $a_n$ even, admits a $\Z_2$-symmetry given by the symplectic involution 
\begin{equation}\label{eq: symp_invol}
\sigma(z_0, \dots, z_{n-1}, z_n) = (z_0, \dots, z_{n-1}, -z_n).
\end{equation}
Its fixed point set by definition can be identified with the lower dimensional Milnor fiber $V(a_0, a_1, \dots, a_{n-1})$. 
A real Lagrangian $L : =  L_{(m_0, \dots, m_n)}$ then corresponds to its invariant part, namely the lower dimensional real Lagrangian $L^{\inv} : = L_{(m_0, \dots, m_{n-1})}$. In this situation, the Smith inequality tells us that  the growth rate $\Lambda(L)$ is bounded from below by the growth rate $\Lambda(L^{\inv})$ in the fixed point set:
\begin{theorem}\label{thm: intro_locHW}
Under a stably trivial normal condition, the growth rate of $L^{\inv}$ bounds the growth rate of $L$ from below i.e. $\Lambda(L) \geq \Lambda(L^{\inv})$.
\end{theorem}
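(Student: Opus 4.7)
The plan is to deduce the growth rate inequality from a filtered Smith-type inequality
\[
\dim \HW^a(L; \Z_2) \geq \dim \HW^a(L^{\inv}; \Z_2)
\]
valid for every action window $a > 0$, where the right-hand side denotes wrapped Floer homology computed inside the symplectic fixed locus $V(a_0, \dots, a_{n-1}) = \Fix(\sigma)$. Once this is in hand, dividing by $\log a$ and taking $\liminf$ in the definition of $\Lambda$ from Section~\ref{sec: growthrate} yields $\Lambda(L) \geq \Lambda(L^{\inv})$ immediately.

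To set up this inequality, I would first check that the symplectic involution $\sigma$ from~\eqref{eq: symp_invol} preserves $L = L_{\mathbf m}$, with fixed locus exactly $L^{\inv} = L_{(m_0, \dots, m_{n-1})}$; this follows because $\sigma$ commutes with $\mathcal R_{\mathbf m}$, so $\sigma(L) = L$ and $L \cap \Fix(\sigma)$ is the real Lagrangian attached to $(m_0, \dots, m_{n-1})$ inside the lower dimensional Milnor fiber. Next I would choose admissible Hamiltonians and almost complex structures that are $\sigma$-equivariant, which is possible since $\sigma$ preserves both the Liouville form and the periodic Reeb flow on the boundary, and organize these into a $\sigma$-equivariant cofinal system computing $\HW^a(L; \Z_2)$.

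The core input is the Seidel--Smith localization principle \cite{SS} applied to this equivariant Floer complex. The stably trivial normal bundle hypothesis is precisely what allows one to achieve equivariant transversality and to match linearizations along Floer trajectories, so that the moduli spaces of $\sigma$-invariant strips in $V(\mathbf a)$ bounded by $L$ are identified with the Floer moduli spaces in $\Fix(\sigma)$ bounded by $L^{\inv}$. Since $\sigma$ preserves the Hamiltonian action, the action filtration is respected at every stage; the standard Smith algebra over $\Z_2$ then yields the filtered dimension inequality above.

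The main obstacle is adapting the Seidel--Smith machinery, originally formulated for closed exact Lagrangians in compact ambient settings, to the wrapped setting on the Liouville completion: one must verify equivariant compactness of Floer trajectories in the cylindrical end, exclude equivariant degenerations at infinity, and ensure that the continuation maps defining the direct system for $\HW^a$ can be chosen $\sigma$-equivariantly so that the Smith localization passes to filtered homology and then to the direct limit. Once these technical adaptations are in place, the inequality $\Lambda(L) \geq \Lambda(L^{\inv})$ is a formal consequence of the definition of the linear growth rate.
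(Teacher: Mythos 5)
Your overall plan — establish a filtered Smith inequality $\dim \HW^a(L) \geq \dim \HW^a(L^{\inv})$ for every $a$ and then pass to the $\liminf$ defining $\Lambda$ — is exactly the shape of the paper's argument. (A minor slip: by Definition~\ref{def: growthrate} one divides by $a$, not $\log a$.) But the core step is where you diverge from the paper and where your proposal leaves a genuine gap: you propose to apply the Seidel--Smith localization \emph{directly} to a $\sigma$-equivariant wrapped Floer complex, and you yourself identify the main obstacle as extending their machinery to the Liouville completion (equivariant compactness in the cylindrical end, equivariant continuation maps, passing Smith theory through the direct limit). You do not carry out any of these adaptations, and the stably trivial normal structure hypothesis is never tied to a concrete Lagrangian pair in your setup.

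The paper avoids all of this. The key observation is Remark~\ref{rem: fHWcanisoHF}: for fixed action window $a$, one has a canonical isomorphism $\HW^a(L) \cong \HF(H^{\tau})$ for a single admissible Hamiltonian $H^{\tau}$ of slope $\tau$ close to $a$. After averaging $H^{\tau}$ to make it $\sigma$-invariant (Remark~\ref{rem: exofsiginvHam}), one identifies $\HF(H^{\tau}) \cong \HF(L, \phi_{H^{\tau}}(L))$ as a Lagrangian \emph{intersection} Floer homology. This is precisely the setting of Seidel--Smith's Theorem~1 (cited here as Theorem~\ref{thm: loc_LFH}), applied to the quadruple $(W, L, \phi_{H^{\tau}}(L), \sigma)$ — which is also where the stably trivial normal structure hypothesis enters: it is required for each slope $\tau$, i.e.\ for each pair $(L, \phi_{H^{\tau}}(L))$. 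The inequality $\dim\HF(L, \phi_{H^{\tau}}(L)) \geq \dim\HF(L_S, \phi_{H^{\tau}_S}(L_S))$ from the original compact/exact Seidel--Smith theorem then unwinds back through the same chain of isomorphisms on the fixed locus to give $\dim\HW^a(L) \geq \dim\HW^a(L_S)$. No re-engineering of Seidel--Smith in the wrapped or noncompact regime is needed. To close your proof you would either need to perform the adaptation you flagged — nontrivial and not sketched in your proposal — or recognize, as the paper does, that the filtered wrapped invariant at finite $a$ already reduces to an honest Lagrangian intersection Floer homology of two admissible Lagrangians inside the compact Liouville domain.
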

See Section~\ref{sec: growthrateofthereallagrangians} for the precise statement. This in particular implies that if the invariant Lagrangian $L^{\inv}$ has positive growth rate in wrapped Floer homology, then so has the given Lagrangian $L$ in question. Basically, in the proof of Theorem~\ref{thm: theorem A}, we take a real Lagrangian in $V(k+1, 2, 2, 2)$ as the invariant Lagrangian $L^{\inv}$ in the Smith inequality, whose linear growth rate is shown to be positive in \cite[Section~7]{KKL18} by direct computations. This is why we assume the technical assumption on the exponents that $a_j = 2$ for at least three $a_j$'s. In addition, we also assumed in Theorem~\ref{thm: theorem A} that most of the exponents are even, and this is for the $\Z_2$-symmetry $\sigma$ in \eqref{eq: symp_invol} to be well-defined on Milnor fibers inductively. It is necessary to apply the Smith inequality in Theorem~\ref{thm: intro_locHW} over $\Z_2$-coefficients. 

\begin{remark}
One can expect to establish, for example, a $\Z_3$-symmetry version of the Smith inequality, which would apply to wrapped Floer homology over $\Z_3$-coefficients, to cover a broader class of exponents in Theorem~\ref{thm: theorem A}. The real Lagrangians $L_{\mathbf m}$ however may not be invariant under the $\Z_3$-symmetry, so one would need to find another class of Lagrangians in this case.
\end{remark}



The assumption in Theorem \ref{thm: intro_locHW} comes from the notion of \emph{stably trivial normal structures} introduced in \cite[Definition~18]{SS}.
This is a sufficient condition to achieve a relevant transversality in a $\Z_2$-equivariant setup for Lagrangian Floer homology, and it is fairly nontrivial to check this assumption in practice. In Section~\ref{sec: stabilytrivialreal}, we observe that the real Lagrangians in Milnor fibers provide fruitful examples which admit stably trivial normal structures with respect to the involution $\sigma$ in \eqref{eq: symp_invol}. This largely relies on the simple nature of the topology of Milnor fibers and of the real Lagrangians. In particular, real Lagrangians which are contractible play a key role for this purpose, and we obtain such Lagrangians in view of the description of the homotopy type in Theorem~\ref{thm: realjoinintro}. Technically, we follow the idea from Hendricks \cite{Hen12} which provides a practical way to guarantee the existence of stably trivial normal structures in certain circumstances.

\subsection*{Organization of the paper} In Section~\ref{sec: GRWF}, we briefly recall basic notions in filtered wrapped Floer homology and the definition of the linear growth rate. In Section~\ref{sec: LOCAL}, we establish a Smith inequality in wrapped Floer homology, and we prove Theorem~\ref{thm: intro_locHW}. We next define a family of real Lagrangians in Milnor fibers of Brieskorn polynomials in Section~\ref{sec: realLag}, and the characterization of their topology in terms of the join construction is given in Section~\ref{sec: TPRL}. The setup to apply the Smith inequality with real Lagrangians is described in Section~\ref{sec: stabilytrivialreal}, and using this, we prove the positivity of the growth rate of some real Lagrangians in Theorem~\ref{thm: mainthmgrowthrateHW}. In Section~\ref{sec: volumegrwothoffiberedtwists}, we finally obtain a uniform lower bound for the slow volume growth in Theorem~\ref{thm: theorem A}.


\section{Growth rate via Smith inequality} \label{section: GRVL}

\subsection{Growth rate of wrapped Floer homology}  \label{sec: GRWF}

\subsubsection{Wrapped Floer homology} Let $(W, \lda)$ be a Liouville domain with a Liouville form $\lda$. This means that $(W, d\lda)$ is a symplectic manifold with boundary $\p W$, and the associated Liouville vector field $X$, defined by $\iota_{X} d\lda = \lda$, is pointing outward along $\p W$. The restricted 1-form $\alpha : = \lda|_{\p W}$ defines a contact structure $\xi : = \ker \alpha$ on $\p W$. A Lagrangian $L$ of $W$ is called \emph{admissible} if $L$ is exact (i.e. $\lda|_{L}$ is exact), $L$ intersects $\p W$ in a Legendrian of $(\p W, \xi)$, and the Liouville vector field $X$ is tangent to $TL$ near the boundary $\p L$. 

\begin{example}\label{ex: ball}
Consider the closed ball $B^{2n} : = \{\mathbf{x} + i \mathbf{y} \in \C^n \;|\; |\mathbf{x}|^2 + |\mathbf{y}|^2 \leq 1\}$ with the standard Liouville form $\lda_{\st} : = \frac{1}{2} (\mathbf{x} d \mathbf{y} - \mathbf{y} d \mathbf{x})$. The associated Liouville vector field is given by the radial field $X = \frac{1}{2}(\mathbf{x} \p_{\mathbf{x}} +  \mathbf{y} \p_{\mathbf{y}})$ which is pointing outward along the boundary $\p B^{2n} = S^{2n-1}$. The subset $L = \{\mathbf{x} + i \mathbf{y} \in B^{2n} \;|\; \mathbf{y} =0\}$ of $B^{2n}$ is an admissible Lagrangian. Its boundary $\p L$ forms the standard Legendrian sphere $S^{n-1}$ in the standard contact sphere $S^{2n-1}$. 
\end{example}

We briefly give the definition of the wrapped Floer homology $\HW(L)$ of an admissible Lagrangian $L \subset W$ mainly following \cite{Rit}; see also \cite{AboSei}. Complete the Liouville domain and the Lagrangian by
$$
\widehat W : = W \cup_{\p W} (\R_{\geq 1} \times \p W), \quad \widehat L : = L \cup_{\p L} (\R_{\geq 1} \times \p L)
$$  
where we glue them along the boundary $\p W$ via the Liouville flow. The completion $\widehat L$ is a Lagrangian of $\widehat W$ with respect to the completed Liouville form $\widehat \lda  : = \lda \cup_{\p W} (r \alpha)$ on $\widehat W$, where $r \in \R_{\geq 1}$. A Hamiltonian $H\colon \widehat W \rightarrow \R$ is called \emph{admissible} if $H$ is $C^2$-small inside $W$, $H$ is of the form $H(r) = \tau r + b$ on the cylindrical end for some $\tau, b \in \R$, where $\tau > 0$ is not a Reeb chord period on the boundary $(\p W, \p L)$. In addition, $H$ is called \emph{nondegenerate} if every Hamiltonian $1$-chord relative to $\widehat L$ is nondegenerate. Here $\tau$ is called the \emph{slope} of $H$. We denote the set of Hamiltonian 1-chords relative to $\widehat L$ by $\mathcal{P}_{\widehat L}(H)$. For a nondegenerate admissible Hamiltonian $H$, we define the \emph{Floer chain complex} $\CF(H)$ by 
$$
\CF(H) = \bigoplus_{x \in \mathcal{P}_{\widehat L}(H)}  \Z_2 \langle x \rangle.
$$
As fairly standard in Lagrangian Floer theory, the differential $\p \colon \CF(H) \rightarrow \CF(H)$ is defined by counting Floer strips connecting two Hamiltonian 1-chords with respect to an admissible family of compatible almost complex structures on $\widehat W$. We refer the reader to \cite[Section~4.4]{Rit} for more details. The homology of the chain complex $(\CF(H), \p)$ is called the \emph{wrapped Floer homology of the Hamiltonian $H$} and denoted by $\HF(H)$.

For general (possibly degenerate) admissible Hamiltonian $H$, we define $\HF(H)$ to be that of a nondegenerate generic perturbation of $H$. By the invariance property of wrapped Floer homology of Hamiltonians under Hamiltonian isotopies as in \cite[Section~4.7]{Rit}, $\HF(H)$ does not depend on the choice of nondegenerate perturbations, and hence is well-defined.
 
For two admissible Hamiltonians $H^{\tau_1}$ and $H^{\tau_2}$ of slopes $\tau_1 \leq \tau_2$, we have a canonical homomorphism $\HF(H^{\tau_1}) \rightarrow \HF(H^{\tau_2})$, called a continuation map. This yields a direct system. The \emph{wrapped Floer homology} $\HW(L)$ of the Lagrangian $L$ is defined to be the direct limit
$$
\HW(L) : = \varinjlim_{\tau \rightarrow \infty} \HF(H^{\tau}).
$$
  
\subsubsection{Action filtration}
The chain complex $\CF(H)$ admits a canonical filtration via the Hamiltonian action functional $
\mathcal{A}_H\colon \Lda_{\widehat L}\widehat W \rightarrow \R$ defined by
\begin{equation}\label{eq: actionfunctional}
\mathcal{A}_H(x) = -\int_{[0, 1]} x^* \lda - \int_0^1 H(x(t))dt + f(x(1)) - f(x(0)).
\end{equation}
Here,  $\Lda_{\widehat L}\widehat W$ is the free path space of $\widehat W$ relative to $\widehat L$, and $f\colon \widehat L \rightarrow \R$ is a primitive of $\widehat \lda|_{\widehat L}$. For $a \in \R$, the filtered chain complex $\CF^{a}(H)$ is a vector space over $\Z_2$ defined by
$$
\CF^{a}(H) = \bigoplus_{\substack{x \in \mathcal{P}(H)\\ \mathcal{A}_H(x) < a}} \Z_2 \langle x \rangle. 
$$
The filtered wrapped Floer homology $\HF^a(H)$ of $H$ is defined to be the homology of $(\CF^{a}(H), \p)$. We define the \emph{filtered wrapped Floer homology $\HW^a(L)$ of the Lagrangian $L$} by taking the direct limit
$$
\HW^a(L) = \varinjlim_{\tau \rightarrow \infty} \HF^a(H^{\tau}).
$$

\begin{remark}\label{rem: fHWcanisoHF}
For each $a \in \R$, we have a canonical isomorphism
$$
\HW^a(L) \cong \HF(H^{\tau_a})
$$
where $H^{\tau_a}$ is an admissible Hamiltonian of slope $\tau_a$ which is sufficiently close to $a$. This follows from a careful choice of a cofinal family of admissible Hamiltonians for $\HW^a(L)$.
\end{remark}

\subsubsection{Linear growth rate} \label{sec: growthrate}  The following notion measures how quickly the dimension (over $\Z_2$) of the filtered wrapped Floer homology $\HW^a(L)$ increases along the action filtration.

\begin{definition}\label{def: growthrate}
The \emph{linear growth rate} $\Lambda(L)$ of an admissible Lagrangian $L$ is defined by
$$
\Lambda(L) = \liminf_{a \rightarrow \infty} \frac{\dim \HW^a(L)}{a} \in [0, \infty].
$$
\end{definition}

We remark that $\HW^a(L)$ is a finite dimensional vector space over $\Z_2$; this follows from \cite[Lemma~4.3]{Rit} and Remark~\ref{rem: fHWcanisoHF}. In addition, if the Reeb flow on the contact boundary $(\p W, \alpha)$ is periodic, then $\Lambda(L) < \infty$; this	 means that the wrapped Floer homology has at most linear growth over the action filtration.

\begin{remark}
Following \cite[Definition~2.4]{Mc18} or \cite[Definition~4.1]{Sei08}, one can define an alternative notion of linear growth rate by
$$
\widetilde{\Lambda}(L) : = \limsup_{a \rightarrow \infty} \frac{\rk(\HW^a (L) \rightarrow \HW(L))}{a}
$$
where $\rk(\HW^a (L) \rightarrow \HW(L))$ denotes the rank of the canonical map $\HW^a (L) \rightarrow \HW(L)$ induced by the inclusion. A good property of $\widetilde{\Lambda}(L)$ is that it is an invariant up to Liouville isomorphism \cite[Lemma~4.2]{Sei08}. We nonetheless shall use $\Lambda(L)$ as it fits better for our application to volume growth in Theorem~\ref{thm: intro_volumegrowthviagrowthrate}.
\end{remark}

\subsection{Smith inequality}  \label{sec: LOCAL} In this section, we establish a Smith inequality in filtered wrapped Floer homology with an application to the growth rate of Lagrangians. 

\subsubsection{Stably trivial normal structures} \label{sec: def_stabnormal} We first briefly recall the notion of stably trivial normal structures, introduced by Seidel--Smith in \cite[Definition~18]{SS}. Let $(W, L_0, L_1, \sigma)$ be a quadruple consisting of a Liouville domain $(W, \lda)$, two admissible Lagrangians $L_0, L_1 \subset W$, and an exact symplectic involution $\sigma\colon W \rightarrow W$; that is, $\sigma$ is a diffeomorphism with $\sigma^2 = \id$ and $\sigma^* \lda = \lda$. The fixed point set $S: = \Fix \sigma$ forms a (possibly empty) symplectic submanifold of $W$ which is itself a Liouville domain with Liouville form $\lda_S : = \lda|_S$. Denote its co-dimension by $2n_{\anti}$. Below, we use the following notations.
\begin{itemize}
\item $L_i^{\inv} := L_i \cap S$.
\item With respect to the projection $[0, 1] \times S \rightarrow S$, the bundle $TW^{\anti}$ denotes the pullback of the normal bundle $N_{W}S$ of $S$ in $W$.
\item $TL_i^{\anti} := N_{L_i}L_i^{\inv}$.
\end{itemize}

\begin{definition}
A \emph{stably trivial normal structure} for $(W, L_0, L_1, \sigma)$ consists of 
\begin{itemize}
\item a unitary trivialization
$$
\Phi \colon  TW^{\anti} \oplus \C^N \rightarrow \C^{n_{\anti} + N}
$$
over $S$ for some $N \geq 0$;
\item a Lagrangian subbundle $\Lambda_i \subset (TW^{\anti} \oplus \C^N)|_{[0, 1] \times L^{\inv}_i}$ for each $i$, satisfying
\begin{align*}
&\Lambda_0|_{\{0\} \times L_0^{\inv}} = TL_0^{\anti} \oplus \R^N, \quad \;\Phi(\Lambda_0|_{\{0\} \times L_0}) = \R^{n_{\anti} + N}, \\
&\Lambda_1|_{\{1\} \times L_1^{\inv}} = TL_1^{\anti} \oplus i\R^N, \quad \Phi(\Lambda_1|_{\{1\} \times L_1}) = i\R^{n_{\anti} + N}. 
\end{align*}
\end{itemize}
\end{definition}

Under the existence of a stably trivial normal structure, the following Smith inequality from \cite[Theorem~1]{SS} holds  in Lagrangian Floer homology. 

\begin{theorem}[Seidel--Smith]\label{thm: loc_LFH}
If $(W, L_0, L_1, \sigma)$ admits a stably trivial normal structure, then 
$$
\dim \HF(L_0, L_1) \geq \dim \HF(L_0^{\inv}, L_1^{\inv}).
$$
\end{theorem}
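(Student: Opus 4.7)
The plan is to follow the standard localization strategy in equivariant Floer theory as developed by Seidel and Smith. First I would construct a $\Z_2$-equivariant Lagrangian Floer cochain complex $CF_{\Z_2}(L_0,L_1)$ as a module over the group cohomology ring $H^*(B\Z_2;\Z_2)=\Z_2[h]$, using a Borel-type construction: pick a cofinal exhaustion of a contractible free $\Z_2$-space (e.g.\ $S^\infty$), choose a $\Z_2$-equivariant family of Floer data parametrized by it, and count parametrized $\Z_2$-equivariant Floer strips. On the fixed-point side, the restriction to $S$ produces an analogous complex $CF_{\Z_2}(L_0^{\inv},L_1^{\inv})$, which (since $\sigma$ acts trivially on $S$) is nothing but $CF(L_0^{\inv},L_1^{\inv})\otimes\Z_2[h]$.

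Second, I would establish the localization isomorphism: after inverting $h$, the canonical restriction map
\[
h^{-1}HF_{\Z_2}(L_0,L_1)\;\longrightarrow\;h^{-1}HF_{\Z_2}(L_0^{\inv},L_1^{\inv})\cong HF(L_0^{\inv},L_1^{\inv})\otimes\Z_2[h,h^{-1}]
\]
is an isomorphism. The geometric input is that non-equivariant Floer strips come in free $\Z_2$-orbits, so their contributions to the equivariant differential are $h$-divisible and die after localization, while strips contained in $S$ contribute to both sides equally. With this in place, the Smith inequality follows from a purely algebraic argument: $HF_{\Z_2}(L_0,L_1)$ is a finitely generated $\Z_2[h]$-module, the rank of its free part equals $\dim HF(L_0^{\inv},L_1^{\inv})$, and the equivariant spectral sequence with $E_1$-page $HF(L_0,L_1)\otimes\Z_2[h]$ abutting to $HF_{\Z_2}(L_0,L_1)$ shows that this free rank is bounded above by $\dim HF(L_0,L_1)$.

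The main obstacle, and the whole reason the stably trivial normal structure is imposed, is \emph{equivariant transversality}. A naive equivariant perturbation of the Floer data cannot make the moduli spaces of parametrized strips regular at strips mapping into $S$, because $\sigma$ acts linearly on the normal directions and this action cannot be perturbed away. The trivialization $\Phi\colon TW^{\anti}\oplus\C^N\to\C^{n_{\anti}+N}$ together with the Lagrangian subbundles $\Lambda_i$ encodes exactly the data needed to replace the anti-invariant piece of the Cauchy--Riemann operator by a standard linear model on $\C^{n_{\anti}+N}$ with real/imaginary boundary conditions; stabilization by the auxiliary $\C^N$ factor is what allows the indices and orientations to match, and it allows one to produce equivariant Kuranishi charts whose normal components agree with this linear model. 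Once equivariant transversality is granted, the remaining compactness and gluing arguments in the equivariant setting are parallel to the non-equivariant ones and pose no new conceptual difficulty.
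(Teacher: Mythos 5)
The paper does not give a proof of this statement; it cites it verbatim as Seidel--Smith \cite[Theorem~1]{SS}. Your sketch is a correct high-level outline of how Seidel--Smith prove it: a Borel-type $\Z_2$-equivariant Floer complex as a module over $\Ho^*(B\Z_2;\Z_2)$, a localization theorem (after inverting the formal variable) identifying the localized equivariant group with the fixed-point Floer cohomology tensored with the fraction field, and then a purely module-theoretic deduction of the Smith inequality: the free rank of the equivariant group is pinned down from below by localization and bounded above by $\dim\HF(L_0,L_1)$ via the equivariant spectral sequence. Two small points of calibration are worth noting. Seidel--Smith work over the graded power-series ring $\Z_2[[q]]$ and invert to $\Z_2((q))$, since the Borel construction is built as an inverse limit over finite approximations $\RP^k$; this does not affect the dimension count but does affect the finiteness bookkeeping. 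They also do not appeal to equivariant Kuranishi charts: the stably trivial normal structure is used to split the linearized Cauchy--Riemann operator along the fixed locus into a tangential part (perturbed as usual inside $S$) and a stabilized normal part that matches a standard Fredholm model whose surjectivity can be arranged directly, so the transversality argument is carried out in the classical perturbative framework. Modulo this bookkeeping, the roles you assign to the localization isomorphism, the module rank, and the spectral sequence match the source, and your identification of equivariant transversality as the reason the stably trivial normal structure is imposed is exactly right.
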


\begin{remark}
In Theorem~\ref{thm: loc_LFH}, the Lagrangians $L_0$ and $L_1$ are not assumed to intersect transversely.
\end{remark}


\subsubsection{Smith inequality in filtered wrapped Floer homology}  Let $(W, \lda)$ be a Liouville domain with an exact symplectic involution $\sigma\colon W \rightarrow W$. Denote the fixed point set by $S = \Fix \sigma$. Let $L \subset W$ be an admissible Lagrangian, and denote $L_S : = L \cap S$, which is an admissible Lagrangian in $(S, \lda : = \lda|_{S})$. Note that the completion $(\widehat S, \widehat L_S)$ naturally embeds into the completion $(\widehat W, \widehat L)$. The involution $\sigma\colon W \rightarrow W$ extends to an exact symplectic involution on the completion $\widehat W$ by 
\begin{equation}\label{eq: sigmaextension}
\sigma(r, y) = (r, \sigma|_{\p W}(y))
\end{equation}
where $(r, y) \in \R_{\geq 1} \times \p W$.


\begin{example}
Consider the standard symplectic ball $(B^{2n}, \lda_{\st})$ and the Lagrangian $L \subset B^{2n}$ as in Example~\ref{ex: ball}. The map $\sigma\colon \C^n \rightarrow \C^n$ given by ${\mathbf z} \mapsto (z_1, \dots, z_{n-1}, - z_n)$ defines an exact symplectic involution on $(B^{2n}, \lda_{\st})$.
The fixed point set $S = \Fix \sigma \subset B^{2n}$ is given by $S = \{{\mathbf z} \in B^{2n} \;|\; z_n =0\}$.
With the restricted form $\lda_S = \lda_{\st}|_{S}$, the pair $(S, \lda_{S})$ can be identified with the closed $(2n-2)$-dimensional ball $(B^{2n-2}, \lda_{\st})$ with the standard Liouville form $\lda_{\st}$ on $B^{2n-2}$.
The intersection $L_S = L \cap S$ is the Lagrangian in $S$ given by $\{{\mathbf z} \in L \;|\; z_n = 0\}$.	
\end{example}

A Hamiltonian $H \colon \widehat W \rightarrow \R$ is called \emph{$\sigma$-invariant} if $H \circ \sigma = H$. If $H$ is a $\sigma$-invariant admissible Hamiltonian, then the restriction $H_S : = H|_{\widehat S}\colon \widehat S \rightarrow \R$ defines an admissible Hamiltonian on $\widehat S$. The associated Hamiltonian flows $\phi_{H}^t$ and $\phi_{H_S}^t$ are related by $\phi_{H_S}^t = \phi_H^t|_{\widehat S}$. 

\begin{remark}\label{rem: exofsiginvHam}
Each admissible Hamiltonian gives rise to a $\sigma$-invariant admissible Hamiltonian of the same slope. Let $H\colon \widehat W \rightarrow \R$ be an admissible Hamiltonian of slope $\tau$. Define a new Hamiltonian $H_{\sigma}\colon \widehat W \rightarrow \R$ by
$$
H_{\sigma}(z) : = \frac{1}{2} \left(H(z) + H(\sigma(z)) \right).
$$ 
Then $H_{\sigma}(\sigma(z)) = \frac{1}{2}(H(\sigma(z)) + H(\sigma^2(z))) =  \frac{1}{2}(H(\sigma(z)) + H(z)) =  H_{\sigma}(z)$ i.e. $H_{\sigma}$ is $\sigma$-invariant. 
Since $H$ is admissible, we know $H(r, y) = H(r)$ for large $r$, and it follows from \eqref{eq: sigmaextension} that 
$$
H_{\sigma}(r, y)= \frac{1}{2}(H(r, y) + H(r, \sigma|_{\p W}(y))) = \frac{1}{2}(H(r) + H(r)) = H(r).
$$
Therefore $H_{\sigma}$ is admissible of the same slope as $H$. Note that $H_{\sigma}$ is not necessarily nondegenerate even if $H$ is nondegenerate.
 \end{remark}

\begin{theorem}[Smith inequality]\label{thm: local_fHW}
Suppose that for every admissible Hamiltonian $H^{\tau}$ of slope $\tau$, the quadruple $(W, L, \phi_{H^{\tau}}(L), \sigma)$ admits a stably trivial normal structure. Then for any $a \in \R$, 
$$
\dim \HW^a(L) \geq \dim \HW^a(L_S).
$$
\end{theorem}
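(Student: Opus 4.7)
The plan is to reduce the filtered Smith inequality to the unfiltered Seidel--Smith inequality for Lagrangian Floer homology (Theorem~\ref{thm: loc_LFH}) via two standard identifications: Remark~\ref{rem: fHWcanisoHF} replaces each filtered wrapped group $\HW^a$ by the Hamiltonian Floer homology $\HF(H^{\tau_a})$ of an admissible Hamiltonian of slope close to $a$, and then the standard Hamiltonian-to-Lagrangian Floer correspondence turns $\HF(H)$ into the Lagrangian Floer homology of the pair $(L, \phi^1_H(L))$, which is exactly the setting of Theorem~\ref{thm: loc_LFH}.

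Fix $a \in \R$ and pick a slope $\tau_a$ close to $a$ that is not realised as the period of any Reeb chord for either $(\p W, \p L)$ or $(\p S, \p L_S)$; both period spectra are discrete, so a generic choice suffices. For any admissible Hamiltonian of slope $\tau_a$ on either completion, Remark~\ref{rem: fHWcanisoHF} gives
$$
\HW^a(L) \cong \HF(H^{\tau_a}), \qquad \HW^a(L_S) \cong \HF(H_S^{\tau_a}).
$$
To realise both identifications with a single Hamiltonian, I would take an admissible Hamiltonian $H$ of slope $\tau_a$ which is $\sigma$-invariant, for instance the symmetrisation of any admissible Hamiltonian of slope $\tau_a$ as in Remark~\ref{rem: exofsiginvHam}; its restriction $H_S := H|_{\widehat S}$ is automatically admissible of slope $\tau_a$ on $\widehat S$, and the flows satisfy $\phi^t_{H_S} = \phi^t_H|_{\widehat S}$.

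The standard Hamiltonian-to-Lagrangian Floer correspondence, applied to $H$ and $H_S$ respectively, then yields
$$
\HF(H) \cong \HF\bigl(L, \phi^1_H(L)\bigr), \qquad \HF(H_S) \cong \HF\bigl(L_S, \phi^1_{H_S}(L_S)\bigr).
$$
Since $H$ is $\sigma$-invariant, $\phi^1_H$ commutes with $\sigma$, so $\sigma(\phi^1_H(L)) = \phi^1_H(L)$ and
$$
\phi^1_H(L) \cap S \;=\; \phi^1_H(L \cap S) \;=\; \phi^1_{H_S}(L_S),
$$
that is, the $\sigma$-invariant part of the pair $(L, \phi^1_H(L))$ is exactly $(L_S, \phi^1_{H_S}(L_S))$. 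The hypothesis then places $(W, L, \phi^1_H(L), \sigma)$ in the Seidel--Smith setup with a stably trivial normal structure, and Theorem~\ref{thm: loc_LFH} produces
$$
\dim \HF\bigl(L, \phi^1_H(L)\bigr) \geq \dim \HF\bigl(L_S, \phi^1_{H_S}(L_S)\bigr).
$$
Chaining this with the identifications above yields $\dim \HW^a(L) \geq \dim \HW^a(L_S)$, as desired.

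The genuine content is packed into Theorem~\ref{thm: loc_LFH}, so the hard part of the plan is largely bookkeeping. The most delicate point is that the symmetrised Hamiltonian from Remark~\ref{rem: exofsiginvHam} need not be nondegenerate, so the Lagrangians $L$ and $\phi^1_H(L)$ may fail to intersect transversely and the chain-level Hamiltonian-to-Lagrangian correspondence requires care. This is handled either by invoking the remark following Theorem~\ref{thm: loc_LFH}, which explicitly allows non-transverse input, or by taking a small generic $\sigma$-invariant perturbation of $H$ and then appealing to Hamiltonian-isotopy invariance of Floer homology; either way the inequality above is preserved and the argument closes.
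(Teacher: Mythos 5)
Your proposal follows essentially the same route as the paper: reduce $\HW^a$ to $\HF(H^{\tau_a})$ via Remark~\ref{rem: fHWcanisoHF}, symmetrise the Hamiltonian as in Remark~\ref{rem: exofsiginvHam} so that it restricts properly to $\widehat S$, pass to Lagrangian intersection Floer homology, and apply Theorem~\ref{thm: loc_LFH} before unwinding the same identifications on the fixed locus. The added care about the slope avoiding the period spectrum on both $(\p W, \p L)$ and $(\p S, \p L_S)$, and about the non-transversality permitted by the remark after Theorem~\ref{thm: loc_LFH}, are reasonable clarifications but do not change the argument.
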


\begin{proof}
As in Remark~\ref{rem: fHWcanisoHF}, we can take an admissible Hamiltonian $H^{\tau}$ of slope $\tau$ with a canonical isomorphism
$$
\HW^a(L) \cong \HF(H^{\tau}).
$$
In view of Remark~\ref{rem: exofsiginvHam}, we may assume that $H^{\tau}$ is $\sigma$-invariant. As is well-known, e.g. see \cite[Remark~4.7]{Rit}, the wrapped Floer homology $\HF(H^{\tau})$ can be identified with a Lagrangian intersection Floer homology via the correspondence between Hamiltonian 1-chords and Lagrangian intersections: 
$$
\HF(H^{\tau}) \cong \HF(L, \phi_{H^{\tau}}(L)).
$$
Now, Theorem~\ref{thm: loc_LFH} tells us that 
$$
\dim \HF(L, \phi_{H^{\tau}}(L)) \geq \dim \HF(L_S, \phi_{H^{\tau}_S}(L_S))
$$
where $H^{\tau}_S$ is the restriction $H^{\tau}|_S$. Since $H^{\tau}_S$ is an admissible Hamiltonian on $\widehat S$, we obtain an isomorphism
$$
\HF(L_S, \phi_{H^{\tau}_S}(L_S)) \cong \HF(H^{\tau}_S).
$$
Again, it follows from Remark~\ref{rem: fHWcanisoHF} that 
$$
\HF(H^{\tau}_S)  \cong \HW^{a}(L_S).
$$
This completes the proof.
\end{proof}

The following is immediate from the definition of the growth rate in wrapped Floer homology; see Theorem \ref{thm: intro_locHW}.

\begin{corollary}\label{cor: locHW}
In the situation of Theorem~\ref{thm: local_fHW}, the linear growth rate of $L_S$ bounds the linear growth rate of $L$ from below i.e. $\Lambda(L) \geq \Lambda(L_S)$.
\end{corollary}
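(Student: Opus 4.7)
The plan is to deduce the corollary directly from Theorem~\ref{thm: local_fHW} by passing to the $\liminf$ in the definition of the linear growth rate. Concretely, under the hypothesis of Theorem~\ref{thm: local_fHW}, the inequality
$$
\dim \HW^a(L) \;\geq\; \dim \HW^a(L_S)
$$
holds for every $a \in \R$. Dividing both sides by $a > 0$ preserves the inequality, so
$$
\frac{\dim \HW^a(L)}{a} \;\geq\; \frac{\dim \HW^a(L_S)}{a}
$$
for all sufficiently large $a$.

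Taking $\liminf$ as $a \to \infty$ and using the elementary fact that $\liminf$ is monotone with respect to pointwise inequalities of real-valued functions (both sides take values in $[0, \infty]$, so there is no issue with indeterminate forms), we obtain
$$
\Lambda(L) \;=\; \liminf_{a \rightarrow \infty} \frac{\dim \HW^a(L)}{a} \;\geq\; \liminf_{a \rightarrow \infty} \frac{\dim \HW^a(L_S)}{a} \;=\; \Lambda(L_S),
$$
which is exactly the claimed inequality. There is essentially no obstacle here, since the content has already been absorbed into Theorem~\ref{thm: local_fHW}; the corollary is just a passage from the filtered chain-level estimate to the asymptotic growth-rate statement through Definition~\ref{def: growthrate}.
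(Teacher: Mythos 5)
Your argument is correct and is exactly the one the paper has in mind: the paper states the corollary is ``immediate from the definition of the growth rate,'' and your passage from the pointwise inequality $\dim \HW^a(L) \geq \dim \HW^a(L_S)$ to the inequality of $\liminf$'s is precisely that spelled out.
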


\section{Real Lagrangians in Milnor fibers of Brieskorn polynomials} \label{sec: real_Bries}


\subsection{Real Lagrangians} \label{sec: realLag} Let $f\colon \C^{n+1} \rightarrow \C$ be a Brieskorn polynomial given by
$$
f(z) = z_0^{a_0} + z_1^{a_1} + \cdots + z_n^{a_n}
$$
where $a_j \in \Z_{\geq 1}$, and consider the Milnor fiber 
$$
V(\mathbf{a}) = V(a_0, \dots, a_n) := f^{-1}(1) = \{{\mathbf z} \in \C^{n+1} \;|\; z_0^{a_0} + z_1^{a_1} + \cdots + z_n^{a_n} = 1\}.
$$
By restricting the standard Liouville form $\lda_{\st}$ of $\C^{n+1}$ to $V(\mathbf{a})$, the Milnor fiber $(V(\mathbf{a}), \lda_{\st})$ forms a completed Liouville domain; see \cite[Remark~2.8]{Kea15}. In this section, we define a certain class of real Lagrangians in $V(\mathbf{a})$.




For each integer $a \geq 1$ and $0 \leq m \leq a-1$, define a map $\mathcal{R}_{m}^a \colon \C \rightarrow \C$ by
$$
\mathcal{R}_{m}^a (z) =e^{m\pi i/a} \cdot \overline {(e^{m \pi i/a})^{-1} \cdot z} = e^{2m\pi i/a} \cdot \overline{z}.
$$
Geometrically, $\mathcal{R}_{m}^a$ is the reflection on $\C$ about the straight line (the dashed lines in Figure~\ref{fig: reflections}) passing through the origin and $e^{m\pi i/a}$.
\begin{figure}[h]
 \centering
     \begin{subfigure}[b]{0.3\textwidth}
         \centering
        \begin{tikzpicture}


\draw[gray, ->, thick] (-1.7,0)--(1.7,0);
\draw[gray, ->, thick] (0,-1.7)--(0,1.7);

\filldraw[black] (1,0) circle (2pt);
\filldraw[black] (120:1) circle (2pt);
\filldraw[black] (240:1) circle (2pt);

\draw[dashed, ultra thick] (120:1.7)--(300:1.6);

\end{tikzpicture}

         \caption{$\Fix \mathcal{R}_2^3 = \{e^{2\pi i/3}\}$}
     \end{subfigure}
     \hfill
     \begin{subfigure}[b]{0.3\textwidth}
         \centering
         \begin{tikzpicture}


\draw[gray, ->, thick] (-1.7,0)--(1.7,0);
\draw[gray, ->, thick] (0,-1.7)--(0,1.7);

\filldraw[black] (1,0) circle (2pt);
\filldraw[black] (90:1) circle (2pt);
\filldraw[black] (180:1) circle (2pt);
\filldraw[black] (270:1) circle (2pt);

\draw[dashed, ultra thick] (0:1.7)--(180:1.8);

\end{tikzpicture}

         \caption{$\Fix \mathcal{R}_0^4 = \{-1, 1\}$}
     \end{subfigure}
     \hfill
     \begin{subfigure}[b]{0.3\textwidth}
         \centering
          \begin{tikzpicture}
\draw[dashed, ultra thick] (45:1.7)--(225:1.8);

\draw[gray, ->, thick] (-1.7,0)--(1.7,0);
\draw[gray, ->, thick] (0,-1.7)--(0,1.7);

\filldraw[black] (1,0) circle (2pt);
\filldraw[black] (90:1) circle (2pt);
\filldraw[black] (180:1) circle (2pt);
\filldraw[black] (270:1) circle (2pt);

\draw[dashed, ultra thick] (45:1.7)--(225:1.7);

\end{tikzpicture}

         \caption{$\Fix \mathcal{R}_1^4 = \emptyset$}
         \label{fig:reflections_c}
     \end{subfigure}
        \caption{Reflections on $\C$}
        \label{fig: reflections}

\end{figure}
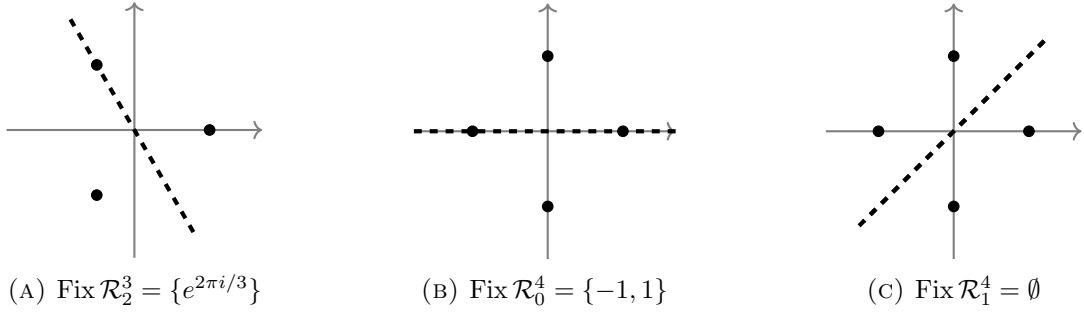
Now, for $0 \leq m_j \leq a_j-1$ and $0 \leq j \leq n$, we define the map $\mathcal{R}_{\mathbf m}^{\mathbf a} \colon \C^{n+1} \rightarrow \C^{n+1}$ coordinate-wise by
$$
\mathcal{R}_{\mathbf m}^{\mathbf a}({\mathbf z}) = \mathcal{R}^{(a_0, \dots, a_n)}_{(m_0, \dots, m_n)}({\mathbf z}) = (\mathcal{R}_{m_0}^{a_0}(z_0), \dots, \mathcal{R}_{m_n}^{a_n}(z_n)).
$$
It is straightforward to see that $\mathcal{R}_{\mathbf m}^{\mathbf a}({\mathbf z})$ is an exact anti-symplectic involution on $(\C^{n+1}, \lda_{\st})$.

\begin{lemma}\label{lem: showing_involution}
The involution $\mathcal{R}_{\mathbf m}^{\mathbf a}({\mathbf z})$ restricts to an exact anti-symplectic involution on $V(\mathbf{a})$.
\end{lemma}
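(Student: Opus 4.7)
The plan is to reduce the claim to a one-variable computation, namely that $\mathcal{R}_m^a$ sends $z^a$ to $\overline{z^a}$, and then to observe that everything else follows from the ambient structure.

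First I would show invariance of the level set. For a single factor, I compute
\[
(\mathcal{R}_m^a(z))^a = (e^{2m\pi i/a}\overline{z})^{a} = e^{2m\pi i}\,\overline{z}^{a} = \overline{z^a}.
\]
Summing over coordinates this gives $f \circ \mathcal{R}_{\mathbf m}^{\mathbf a} = \overline{f}$. Since $\overline{1} = 1$, this proves $\mathcal{R}_{\mathbf m}^{\mathbf a}(V(\mathbf a)) \subseteq V(\mathbf a)$, and by applying the same inclusion to $(\mathcal{R}_{\mathbf m}^{\mathbf a})^{-1} = \mathcal{R}_{\mathbf m}^{\mathbf a}$ we get equality. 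In particular, the restriction $\mathcal{R}_{\mathbf m}^{\mathbf a}|_{V(\mathbf a)}\colon V(\mathbf a)\to V(\mathbf a)$ is a well-defined smooth involution.

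Next I would verify the exact anti-symplectic property on the ambient $\C^{n+1}$ and then just restrict. Writing $\lambda_{\st} = \tfrac{i}{4}\sum_j (z_j\, d\bar z_j - \bar z_j\, dz_j)$, a direct check on each factor shows that $z\mapsto e^{2m\pi i/a}\bar z$ pulls $z\,d\bar z - \bar z\,dz$ back to its negative, since the phase factors $e^{\pm 2m\pi i/a}$ cancel. Hence $(\mathcal{R}_{\mathbf m}^{\mathbf a})^*\lambda_{\st} = -\lambda_{\st}$ on $\C^{n+1}$. Because $V(\mathbf a)$ is preserved and $\lambda_{\st}$ is restricted from $\C^{n+1}$, pulling back commutes with restriction, giving $(\mathcal{R}_{\mathbf m}^{\mathbf a}|_{V(\mathbf a)})^* \lambda_{\st} = -\lambda_{\st}$ on $V(\mathbf a)$ as well.

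This is essentially a bookkeeping argument, and I do not expect a real obstacle; the only point that has to be handled with any care is the $a_j$-th power identity $(\mathcal{R}_{m_j}^{a_j}(z_j))^{a_j} = \overline{z_j^{a_j}}$, which is the reason the phase $e^{2m_j\pi i/a_j}$ (rather than some other root of unity) is the right choice. Once that identity is in hand the rest is immediate.
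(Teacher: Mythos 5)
Your proof is correct and follows essentially the same route as the paper's: both hinge on the one-variable identity $(\mathcal{R}_m^a(z))^a = \overline{z^a}$, summed over coordinates to give $f\circ\mathcal{R}_{\mathbf m}^{\mathbf a} = \overline{f}$, so that $V(\mathbf a)=f^{-1}(1)$ is preserved. The paper dispenses with the anti-symplectic check on $\C^{n+1}$ as an assertion made just before the lemma, whereas you spell it out, but that is the only difference and it is a matter of exposition rather than substance.
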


\begin{proof}
It suffices to show that the restriction of $\mathcal{R}_{\mathbf m}^{\mathbf a}|_{V(\mathbf a)}$ is well-defined. By the definition of the reflection $\mathcal{R}_{m_j}^{a_j}\colon \C \rightarrow \C$, we have $
(\mathcal{R}_{m_j}^{a_j}(z_j))^{a_j} =  \overline z_j^{a_j}$. 
It follows that, for each $z \in \C^{n+1}$,
\begin{align*}
f(\mathcal{R}_{\mathbf m}^{\mathbf a}({\mathbf z})) = f(\mathcal{R}_{m_0}^{a_0}(z_0), \dots, \mathcal{R}_{m_n}^{a_n}(z_n)) &= (\mathcal{R}_{m_0}^{a_0}(z_0))^{a_0} + (\mathcal{R}_{m_1}^{a_1}(z_1))^{a_1} + \cdots + (\mathcal{R}_{m_n}^{a_n}(z_n))^{a_n} \\
&= \overline z_0^{a_0} + \cdots +  \overline z_n^{a_n} = \overline{f({\mathbf z})}.
\end{align*}
This completes the proof.
\end{proof}
\begin{remark}\label{rem: restrictstotheball}
Moreover, it is direct from the definition that $\mathcal{R}_{\mathbf{m}}^{\mathbf{a}}\colon \C^{n+1} \rightarrow \C^{n+1}$ restricts to the intersection $V(\mathbf{a}) \cap B^{2n+2}$ where $B^{2n+2} \subset \C^{n+1}$ is the closed ball. 
\end{remark}
Consequently, the fixed point set $\Fix \mathcal{R}_{\mathbf m}^{\mathbf a}$ is a  (possibly empty or disconnected) real Lagrangian in $V(\mathbf{a})$. We denote it by $L_{\mathbf{m}} = L_{\mathbf{m}}^{\mathbf{a}} : = \Fix \mathcal{R}_{\mathbf m}^{\mathbf a}$.

\begin{example}\label{ex: invol_quad}
As in \cite[Lemma~3.1]{KvK}, we can identify $V(2, \dots, 2)$ with $T^*S^n$ by the symplectomorphism
$$
{\mathbf z} = {\mathbf x}+ i{\mathbf y} \mapsto (|{\mathbf x}|^{-1}{\mathbf x}, |{\mathbf x}|{\mathbf y}).
$$
Consider the anti-symplectic involutions $\mathcal{R}_{(0, \dots, 0)}^{(2, \dots, 2)}$ and $\mathcal{R}_{(0,1 \dots, 1)}^{(2,2, \dots, 2)}$ on $V(2, \dots, 2)$ given by
$$
\mathcal{R}_{(0, \dots, 0)}^{(2, \dots, 2)} ({\mathbf z}) = (\overline z_0, \dots, \overline z_n), \quad \mathcal{R}_{(0,1 \dots, 1)}^{(2,2, \dots, 2)} ({\mathbf z})  = (\overline z_0, - \overline z_1, \dots, -\overline z_n).
$$
It is straightforward to see that the real Lagrangian $L_{(0, \dots, 0)}$ is exactly the zero section of $T^*S^n$ and the real Lagrangian $L_{(0, 1, \dots, 1)}$ is the disjoint union of two copies of fibers of $T^*S^n$ under the identification.
\end{example}

\subsection{Topology of real Lagrangians} \label{sec: TPRL} We can describe the topology of the real Lagrangian $L_{\mathbf{m}}$ in $V(\mathbf{a})$ in terms of the join of topological spaces.  

Let $I = [0, 1]$ be the closed unit interval. Recall that the \emph{join} $X_1 * \cdots * X_N$ of topological spaces $X_1, \dots, X_N$ is the quotient space given by
\begin{equation}\label{eq: joindef}
X_1 * \cdots * X_N : = \left\{(x_1, t_1, \dots, x_N, t_N) \in X_1 \times I \times \cdots \times X_N \times I \;|\; \sum_{j=1}^N t_j  = 1\right\} \bigg/ \sim
\end{equation}
where $(x_1, t_1, \dots, x_N, t_N) \sim (x_1', t_1', \dots, x_N', t_N')$ if and only if $t_j = t_j'$ for all $j$ and $x_j = x_j'$ if $t_j >0$.
\begin{remark}\label{rem: TS}
The homotopy type of $V(\mathbf{a}) = V(a_0, a_1, \dots, a_n)$ is given by the join of $0$-dimensional Milnor fibers:
\begin{equation}\label{eq: Pham}
V(\mathbf{a}) \simeq V(a_0) *V(a_1) * \cdots * V(a_n).
\end{equation}
See \cite[Corollary~1]{Oka73}, \cite{Pha65}, and \cite{SebTho71} for a more general version by Sebastiani--Thom.
\end{remark}
To describe the topology of $L_{\mathbf m}$, note that $V(a) = \{z \in \C \;|\; z^a =1\}$ consists of the $a$-th roots of unity. We denote the fixed point set of the reflection $\mathcal{R}^a_m\colon V(a) \rightarrow V(a)$ by $L_m$. The following observation is straightforward; see Figure~\ref{fig: reflections}.
\begin{proposition}\label{prop: 0dimL}
The fixed point set $L_m$ is either the empty set, a one point set, or a two points set. 
\end{proposition}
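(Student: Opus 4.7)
The plan is direct computation: determine the fixed points of the reflection $\mathcal{R}_m^a\colon \C \to \C$ on the whole unit circle, then intersect with the set $V(a) = \{z \in \C : z^a = 1\}$ of $a$-th roots of unity.

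First I would identify the fixed points of $\mathcal{R}_m^a$ on $\C$. Writing a fixed point in polar form $z = r e^{i\theta}$, the equation $z = e^{2m\pi i/a}\overline{z}$ yields $r e^{i\theta} = r e^{2m\pi i/a - i\theta}$, so $e^{2i\theta} = e^{2m\pi i/a}$. Hence $\theta \equiv m\pi/a \pmod{\pi}$, meaning that the fixed locus in $\C$ is exactly the real line through the origin and through $e^{m\pi i/a}$, as already suggested by Figure~\ref{fig: reflections}. In particular, intersecting with the unit circle leaves only the two candidate points
\[
z_{+} := e^{m\pi i/a}, \qquad z_{-} := -z_{+} = e^{m\pi i/a + i\pi}.
\]

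Next I would check which of $z_{\pm}$ actually lie in $V(a)$. A direct computation gives $z_{+}^{a} = e^{m\pi i} = (-1)^{m}$ and $z_{-}^{a} = (-1)^{a} z_{+}^{a} = (-1)^{a+m}$. Hence $z_{+} \in V(a)$ iff $m$ is even, and $z_{-} \in V(a)$ iff $a+m$ is even, i.e.\ iff $a$ and $m$ have the same parity. Splitting into cases: (i) if $a$ is even, then both $z_{\pm}$ belong to $V(a)$ when $m$ is even and neither does when $m$ is odd, giving $|L_m| \in \{0,2\}$; (ii) if $a$ is odd, then exactly one of $z_{\pm}$ lies in $V(a)$ regardless of the parity of $m$, giving $|L_m| = 1$. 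This exhausts all possibilities and yields the claim.

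The argument is essentially a two-line verification, so there is no real obstacle; the only point requiring a bit of care is keeping the parity bookkeeping straight in the case split. It may be worth recording in the proof the explicit identification of $L_m$ in each case (namely $L_m = \{z_+, z_-\}$, $\{z_+\}$, $\{z_-\}$, or $\emptyset$), since the three pictures in Figure~\ref{fig: reflections} correspond exactly to the three nontrivial cardinalities and these explicit descriptions will be useful in the subsequent join-type description of $L_{\mathbf{m}}$ in Theorem~\ref{thm: realjoinintro}.
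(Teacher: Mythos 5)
Your proposal is correct and is exactly the verification the paper treats as ``straightforward; see Figure~\ref{fig: reflections}''; the same parity case analysis is recorded explicitly (without proof) in the paper's Lemma~\ref{lem: emptyforsomej}. You have simply written out the two-line computation the paper leaves implicit, and your explicit identification of $L_m$ in each case agrees with what the paper uses later.
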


As a real analogue of \eqref{eq: Pham}, the topology of the real Lagrangian $L_{\mathbf m}$ can be written as the join of the corresponding $0$-dimensional fixed point sets $L_{m_j}$:
\begin{theorem}\label{thm:top_real}
The real Lagrangian $L_{\mathbf{m}} = L_{(m_0, \dots, m_n)}$ is homotopy equivalent to the join $L_{m_0} * L_{m_1} * \cdots * L_{m_n}$ i.e.
$$
L_{\mathbf{m}} \simeq L_{m_0}* L_{m_1} * \cdots * L_{m_n}.
$$
\end{theorem}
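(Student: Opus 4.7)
The plan is to follow the strategy that proves the classical homotopy equivalence $V(\mathbf a) \simeq V(a_0) * \cdots * V(a_n)$ recalled in Remark~\ref{rem: TS}, exhibiting $L_{\mathbf m}$ as deformation retracting onto a polyhedral ``real Pham polyhedron'' $P_{\mathbf m}$ which is manifestly homeomorphic to the asserted join. The condition $\mathcal{R}_{\mathbf m}^{\mathbf a}(z) = z$ forces $z_j = e^{m_j\pi i/a_j} r_j$ with $r_j \in \R$, so writing $\epsilon_j := (-1)^{m_j}$ one has
\[
L_{\mathbf m} = \Bigl\{\mathbf r \in \R^{n+1} \;\Big|\; \sum_{j=0}^n \epsilon_j r_j^{a_j} = 1 \Bigr\}, \qquad P_{\mathbf m} := \bigl\{\mathbf r \in L_{\mathbf m} \;\big|\; \epsilon_j r_j^{a_j} \geq 0 \text{ for all } j\bigr\}.
\]
Every element of $L_{m_j}$ has the form $\zeta_j = e^{m_j\pi i/a_j}\eps_j$ with $\eps_j\in\{\pm 1\}$ and $\epsilon_j\eps_j^{a_j} = 1$ (forced by $\zeta_j^{a_j}=1$), so the natural map
\[
\phi\bigl((\zeta_j,t_j)_j\bigr) := \bigl(t_j^{1/a_j}\zeta_j\bigr)_j
\]
descends to the join quotient, lands in $P_{\mathbf m}$ (since $\epsilon_j r_j^{a_j} = \epsilon_j\eps_j^{a_j} t_j = t_j \geq 0$ and $\sum t_j = 1$), and is inverted on the locus $\{t_j > 0\}$ by $t_j = \epsilon_j r_j^{a_j}$ and $\eps_j = r_j/t_j^{1/a_j}$, giving a homeomorphism $\phi \colon L_{m_0} * \cdots * L_{m_n} \xrightarrow{\cong} P_{\mathbf m}$.

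The main technical input is a deformation retraction $L_{\mathbf m} \to P_{\mathbf m}$. The image coordinates $\mathbf t := (\epsilon_j r_j^{a_j})_j$ sweep out a convex slab $T$ inside the hyperplane $\{\sum t_j = 1\}$ cut out by the sign constraints $\epsilon_j t_j \geq 0$ on those indices $j$ for which $a_j$ is even. Since $\sum_k t_k^+ \geq \sum_k t_k = 1$, the normalized positive part
\[
\rho(\mathbf t) := \frac{(t_j^+)_j}{\sum_k t_k^+}
\]
is a continuous retraction of $T$ onto $T_+ := T \cap \{t_j \geq 0 \;\forall j\}$, and the straight-line homotopy $H_s := (1-s)\id + s\rho$ stays in $T$ by convexity. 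I would lift $H_s$ to $L_{\mathbf m}$ by $r_j(s) := \sign(r_j)\,|H_s(\mathbf t)_j/\epsilon_j|^{1/a_j}$ with $\sign(0) := 0$; a short case check on the parity of $a_j$ gives $\epsilon_j r_j(s)^{a_j} = H_s(\mathbf t)_j$, so $\mathbf r(s) \in L_{\mathbf m}$ throughout. Continuity as a function of $(\mathbf r, s)$ holds because $H_s(\mathbf t)_j$ vanishes whenever $t_j = 0$, which kills the potential discontinuity of $\sign(r_j)$ at $r_j = 0$. Since $\mathbf r(0) = \mathbf r$ and $\mathbf r(1) \in P_{\mathbf m}$, this furnishes the desired retraction, yielding $L_{\mathbf m} \simeq P_{\mathbf m} \cong L_{m_0} * \cdots * L_{m_n}$.

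A delicate bookkeeping point concerns empty factors. If $L_{m_j}$ is empty---which occurs precisely when $a_j$ is even and $m_j$ odd---then the sign constraint in the definition of $P_{\mathbf m}$ forces $r_j = 0$, so the $j$-th factor drops out of $P_{\mathbf m}$, in accordance with the convention $X * \emptyset \simeq X$ (equivalently $\emptyset = S^{-1}$) under which the join in the theorem is to be read; one verifies in parallel that $L_{\mathbf m}$ itself is empty exactly when every $L_{m_j}$ is empty, since otherwise zeroing out all but a single nonempty coordinate already yields a point. The main obstacle I anticipate is the careful continuity verification for the sign-lifted homotopy across the parity cases, together with matching the join-with-empty-factor convention correctly. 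Once these are handled, the dichotomy ``contractible or $S^k$'' asserted after the theorem follows at once from $S^k * S^l \simeq S^{k+l+1}$ combined with the cone identity $X * \{\pt\} \simeq \{\pt\}$ whenever some $L_{m_j}$ is a singleton.
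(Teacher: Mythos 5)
Your proof is correct and follows essentially the same route as the paper's, which is modeled on Oka's argument: both deformation retract $L_{\mathbf m}$ onto the real Pham polyhedron $\widetilde L_{\mathbf m} = P_{\mathbf m}$ and then exhibit the explicit homeomorphism with the join via the $(\zeta_j,t_j) \leftrightarrow t_j^{1/a_j}\zeta_j$ parametrization. In fact your straight-line homotopy in the $\mathbf t$-coordinates is literally the same map as the coordinate-wise rescaling in the paper's Lemma~\ref{lem: deformtononneg} (using $1 - g(0)=\sum_k t_k^+$), and your sign-constraint observation that forces $r_j=0$ when $L_{m_j}=\emptyset$ is the content of the paper's Lemma~\ref{lem: emptyforsomej}.
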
 
Here we conventionally put $X * \emptyset = \emptyset * X : = X$ for a topological space $X$.
See also Theorem~\ref{thm: realjoinintro}. For the proof of Theorem~\ref{thm:top_real}, we will manipulate the explicit homotopy equivalences given in \cite[Proof of Theorem~1]{Oka73} for the case of the ambient spaces \eqref{eq: Pham}, focusing on its behavior with respect to the real parts. 
The following lemma is technically crucial.

\begin{lemma}\label{lem: deformtononneg}
The Lagrangian $L_{\mathbf m}$ deformation retracts onto its subset $\widetilde L_{\mathbf m}$ given by
$$
\widetilde L_{\mathbf m} : =\{{\mathbf z} \in L_{\mathbf m} \;|\; z_j^{a_j} \geq 0 \text{ for all $j$}\}.
$$
\end{lemma}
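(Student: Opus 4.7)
The plan is to build an explicit straight-line homotopy in the real affine hyperplane $\{\sum w_j = 1\} \subset \R^{n+1}$ parametrized by $w_j := z_j^{a_j}$, and then to lift it back to $L_{\mathbf m}$ by extracting real $a_j$-th roots. Since the reflection $\mathcal R^{a_j}_{m_j}\colon\C\to\C$ fixes precisely the real line $\R\,e^{im_j\pi/a_j}$, every $\mathbf z\in L_{\mathbf m}$ can be written uniquely as $z_j=r_j e^{im_j\pi/a_j}$ with $r_j\in\R$, and
\begin{equation*}
w_j(\mathbf z) := z_j^{a_j} = (-1)^{m_j} r_j^{a_j} \in \R.
\end{equation*}
Thus the defining equation of $V(\mathbf a)$ restricted to $L_{\mathbf m}$ becomes the affine constraint $\sum_{j=0}^n w_j = 1$ in $\R^{n+1}$, and $\widetilde L_{\mathbf m}$ is precisely the preimage of the closed positive cone $\{w_j\geq 0\}$ under $\mathbf z \mapsto (w_0(\mathbf z),\dots,w_n(\mathbf z))$.

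The homotopy I plan to use pushes the negative parts to zero while rescaling the positive parts so as to preserve the sum. Write $w_j = w_j^+ + w_j^-$ for the positive and negative parts, and set $S^+(\mathbf z) := \sum_j w_j^+$; since $\sum_j w_j^-\le 0$, one has $S^+(\mathbf z)\ge 1 > 0$. Define
\begin{equation*}
W_j(t;\mathbf z) := \Bigl(1 - t + \tfrac{t}{S^+(\mathbf z)}\Bigr)\, w_j^+ + (1-t)\, w_j^-,\qquad t\in[0,1].
\end{equation*}
A direct computation gives $\sum_j W_j(t;\mathbf z) \equiv 1$, with $W_j(0;\mathbf z) = w_j$ and $W_j(1;\mathbf z) = w_j^+/S^+(\mathbf z) \geq 0$. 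On $\widetilde L_{\mathbf m}$ we have $w_j^-\equiv 0$ and $S^+\equiv 1$, so $W_j(t;\mathbf z)\equiv w_j$, i.e.\ the homotopy is stationary there. Continuity of the maps $\mathbf z\mapsto w_j$, $\mathbf z\mapsto w_j^\pm$, and $\mathbf z\mapsto S^+$ is immediate.

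It then remains to lift $(W_j(t))$ to a continuous deformation inside $L_{\mathbf m}$ through the $r_j$-coordinates. For odd $a_j$ the map $r\mapsto r^{a_j}$ is a homeomorphism $\R\to\R$, and I set $r_j(t) := \bigl((-1)^{m_j} W_j(t;\mathbf z)\bigr)^{1/a_j}$. For even $a_j$, the identity $w_j = (-1)^{m_j} r_j^{a_j}$ with $r_j^{a_j}\ge 0$ forces $(-1)^{m_j} w_j\ge 0$ identically on $L_{\mathbf m}$, so exactly one of $w_j^+$ or $w_j^-$ is zero; a short case check on the two subcases of the parity of $m_j$ then shows $(-1)^{m_j} W_j(t;\mathbf z)\ge 0$ throughout, and I define $r_j(t) := \sign(r_j(0))\cdot\bigl((-1)^{m_j} W_j(t;\mathbf z)\bigr)^{1/a_j}$ using the nonnegative even root. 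The sign is fixed so as to recover the original $r_j$ at $t=0$. The assembled map
\begin{equation*}
H(\mathbf z,t) := \bigl(r_0(t)\, e^{im_0\pi/a_0},\,\dots,\, r_n(t)\, e^{im_n\pi/a_n}\bigr)
\end{equation*}
then satisfies $H(\cdot,0)=\id$, $H(\cdot,1)\subset\widetilde L_{\mathbf m}$, and $H(\cdot,t)|_{\widetilde L_{\mathbf m}}=\id$ by construction.

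The only point that requires care is the continuity of this lift at an $\mathbf z$ where some $r_j(0)=0$ for an even $a_j$, because the $\sign$ factor jumps across $r_j(0)=0$. I expect this to be the main (but minor) obstacle: it is resolved by observing that $r_j(0)=0$ forces $w_j=0$, which forces $w_j^\pm=0$ and hence $W_j(t;\mathbf z)\equiv 0$ and $r_j(t)\equiv 0$; so the sign ambiguity is multiplied by a quantity that vanishes continuously, and $H$ is continuous on the nose.
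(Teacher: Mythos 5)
Your proof is correct and produces exactly the same deformation retraction as the paper's: the paper (following Oka, Step~2) scales each coordinate with $z_j^{a_j}<0$ by $(1-t)^{1/a_j}$ and rescales the remaining coordinates so that $\sum_j z_j^{a_j}=1$ is preserved, and your formula $W_j(t)=\bigl(1-t+\tfrac{t}{S^+}\bigr)w_j^+ + (1-t)w_j^-$ simply encodes these two cases in a single expression (unwinding it recovers $(1-t)w_j$ on the negative coordinates and $\tfrac{1-(1-t)G}{1-G}\,w_j$ on the nonnegative ones, with $G=\sum w_j^-$). The only presentational difference is that the paper defines the deformation directly as a real scalar multiple of each $z_j$, which keeps the point in $L_{\mathbf m}$ automatically and bypasses the even-root sign bookkeeping you carry out at the end; your handling of that point (noting $r_j(0)=0$ forces $W_j(t)\equiv 0$) is correct.
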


\begin{proof} Observe first that, if ${\mathbf z} = (z_0, z_1, \dots, z_n) \in L_{\mathbf{m}}$, then $z_j^{a_j} \in \R$ for all $0 \leq j \leq n$ since $z_j \in \Fix \mathcal{R}_{m_j}^{a_j}$. In light of Step 2 in \cite[Proof of Theorem~1]{Oka73}, we define a deformation retraction $G\colon L_{\mathbf{m}} \times [0, 1] \rightarrow L_{\mathbf{m}}$ from $L_{\mathbf{m}}$ onto $\widetilde L_{\mathbf m}$ as follows.
\begin{itemize}
\item For points ${\mathbf z} \in L_{\mathbf{m}}$ with $k+1$ negative $z_j^{a_j}$, assuming without loss of generality $z_{0}^{a_0}, \dots, z_k^{a_k} < 0$ for some $0 \leq k \leq n$, we define $G({\mathbf z}, t) : = (z_0(t), \dots, z_n(t))$ where
$$
z_j(t) = \begin{cases} (1-t)^{\frac{1}{a_j}}z_j & \text{for $0 \leq j \leq k$}, \\
				\displaystyle \left(\frac{1- g(z_0(t), \dots, z_k(t))}{1-g(z_0, \dots, z_k)} \right)^{\frac{1}{a_j}} z_j & \text{for $k+1 \leq j \leq n$},			
	\end{cases}
$$ 
where $g(z_0, \dots, z_k) := z_0^{a_0} + \dots + z_k^{a_k}$.
\item For ${\mathbf z}$ with $z_j^{a_j} \geq 0$ for all $j$, we define $G$ to be the identity i.e. $G({\mathbf z}, t) = {\mathbf z}$.
\end{itemize}
Note that $z_j(t)$ is a real multiple of $z_j$ for all $j$, so $\mathcal{R}_{m_j}^{a_j}(z_j(t)) = z_j(t)$, and a direct computation shows that $\displaystyle \sum_{j=0}^{n} z_j(t)^{a_j} = 1$. Therefore the map $G\colon L_{\mathbf{m}} \times [0, 1] \rightarrow L_{\mathbf{m}}$ is well-defined.
Moreover, from the definition, $G = \id$ for $t = 0$ and $G({\mathbf z}, 1) \in \widetilde L_{\mathbf{m}}$. It is straightforward to see that $G$ is continuous. It follows that $G$ is a deformation retraction of $L_{\mathbf{m}}$ onto $\widetilde L_{\mathbf{m}}$. 
\end{proof}

\begin{lemma}\label{lem: emptyforsomej}
Suppose that $L_{m_j} = \emptyset$ for some $j$. Then if ${\mathbf z} = (z_0, \dots, z_n) \in \widetilde L_{\mathbf m}$, then $z_j = 0$. 
\end{lemma}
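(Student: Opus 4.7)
The plan is to first identify precisely when $L_{m_j}=\emptyset$ can occur, and then exploit the very rigid form of the fixed line of $\mathcal{R}_{m_j}^{a_j}$ together with the non-negativity condition $z_j^{a_j}\geq 0$.

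The fixed point set of the reflection $\mathcal{R}_{m_j}^{a_j}\colon \C \to \C$ on all of $\C$ is the line $\ell_{m_j}=\{t\,e^{m_j\pi i/a_j}\mid t\in\R\}$ through the origin and $e^{m_j\pi i/a_j}$. Intersecting $\ell_{m_j}$ with the $a_j$-th roots of unity, one sees that $\zeta=e^{2k\pi i/a_j}$ lies in $L_{m_j}$ precisely when $2k\equiv m_j\pmod{a_j}$. Hence my first step is to observe that $L_{m_j}=\emptyset$ occurs if and only if $a_j$ is even and $m_j$ is odd (when $a_j$ is odd, the congruence always has a unique solution, and when $a_j$ is even the solvability is equivalent to $m_j$ being even; cf.\ Figure~\ref{fig: reflections}).

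Now take ${\mathbf z}=(z_0,\dots,z_n)\in \widetilde L_{\mathbf m}$ with $L_{m_j}=\emptyset$. Since $z_j\in \Fix\mathcal{R}_{m_j}^{a_j}$, I can write $z_j=t\,e^{m_j\pi i/a_j}$ for some $t\in\R$. The key computation is then
\[
z_j^{a_j}=t^{a_j}\,e^{m_j\pi i}=(-1)^{m_j}\,t^{a_j}.
\]
With $a_j$ even and $m_j$ odd, this becomes $z_j^{a_j}=-t^{a_j}\leq 0$. Combined with the defining inequality $z_j^{a_j}\geq 0$ of $\widetilde L_{\mathbf m}$, I conclude $t^{a_j}=0$, hence $t=0$ and $z_j=0$, as claimed.

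I do not anticipate a real obstacle here; the only subtlety is being careful with the parity argument and noting that the reflection line is not oriented, so both signs of $t$ must be allowed in the parametrization. This is exactly what forces the use of $a_j$ being even (so $t^{a_j}\geq 0$ automatically) in the final step.
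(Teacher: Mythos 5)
Your argument is correct and takes essentially the same route as the paper's: both first identify that $L_{m_j}=\emptyset$ forces $a_j$ even and $m_j$ odd, then parametrize the fixed line of $\mathcal{R}_{m_j}^{a_j}$ and observe that $z_j^{a_j}$ is forced to be nonpositive, contradicting $z_j^{a_j}\ge 0$ unless $z_j=0$. The only stylistic difference is that the paper phrases the last step as a contradiction via the possible values of $\arg(z_j)$, whereas you write $z_j=t\,e^{m_j\pi i/a_j}$ with $t\in\R$ and compute the sign of $z_j^{a_j}=(-1)^{m_j}t^{a_j}$ directly, which also handles $z_j=0$ uniformly.
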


\begin{proof}
Recall that $L_{m_j} = \{z \in \C \;|\; z^{a_j} = 1,\; \mathcal{R}_{m_j}^{a_j}(z) = z\}$.
It is direct from the definition of the reflection $\mathcal{R}_{m_j}^{a_j}$ that $L_{m_j}$ is empty if and only if $a_j$ is even and $m_j$ is odd; if $a_j$ is odd, then $L_{m_j}$ is a one point set for any $m_j$, and if $a_j$ and $m_j$ are even, then $L_{m_j}$ is a two point set.
See Figure~\ref{fig: reflections}. Now, suppose to the contrary that ${\mathbf z} \in \widetilde L_{\mathbf m}$ and $z_j \neq 0$.
Since $z_j \in \Fix \mathcal{R}^{a_j}_{m_j}$, the argument $\arg(z_j)$ is either $m_j\pi/a_j$ or $m_j\pi/a_j + \pi$ (mod $2 \pi$). Therefore the argument $\arg(z_j^{a_j})$ is either $m_j  \pi$ or $(a_j + m_j)\pi$. Since $a_j$ is even and $m_j$ is odd, it follows that $z_j^{a_j} < 0$. This contradicts the definition of $\widetilde L_{\mathbf m}$.
\end{proof}

We now prove Theorem~\ref{thm:top_real}.

\begin{proof}[Proof of Theorem~\ref{thm:top_real}] By Lemma~\ref{lem: deformtononneg}, it suffices to show that
$$
\widetilde L_{\mathbf{m}} \simeq L_{m_0} * L_{m_0} * \dots * L_{m_n}.
$$
We basically follow Step 3 from \cite[Proof of Theorem~1]{Oka73} and take care of  the real structures.

\textbf{Case 1.} \emph{Assume that $L_{m_j}$ is not empty for all $j$.} Define a map $\phi\colon \widetilde L_{\mathbf{m}} \rightarrow L_{m_0} * \cdots * L_{m_n}$ by $\phi({\mathbf z}) = [w_0, t_0, \dots, w_n, t_n]$ where 
$$
w_j = (1/|z_j|)z_j, \quad t_j = z_j^{a_j}.
$$
If $z_j = 0$ for some $j$, then we conventionally interpret $w_j$ to be any point in $L_{m_j}$. Observe that this map is nonetheless not multi-valued due to the equivalence relation in \eqref{eq: joindef}. 
To further check that $\phi$ is well-defined, recall that $\widetilde L_{\mathbf m}$ is given by
\begin{equation}\label{eq: deftildeL}
\widetilde L_{\mathbf m} = \left\{{\mathbf z} \in \C^{n+1} \;|\; \sum_{j=0}^n z_j^{a_j} = 1, \text{ $\mathcal{R}_{m_j}^{a_j}(z_j) = z_j$ and $z_j^{a_j} \geq 0$ for all $j$}\right\}.
\end{equation}
The first and the third conditions in \eqref{eq: deftildeL} show $0 \leq t_j \leq 1$ and $\sum_j t_j = 1$, and we also see that 
$$
\mathcal{R}_{m_j}^{a_j}(w_j) = (1/|z_j|) \mathcal{R}_{m_j}^{a_j}(z_j) = (1/|z_j|)z_j = w_j
$$
by the second equation in \eqref{eq: deftildeL}. Moreover, by the third condition in \eqref{eq: deftildeL}, we have $w_j^{a_j} = 1$ so that $w_j \in L_{m_j}$. Therefore $\phi$ is well-defined. 

Conversely, define a map $\psi\colon L_{m_0} * \cdots * L_{m_n} \rightarrow \widetilde L_{\mathbf{m}}$ by 
$$
\psi([w_0, t_0, \dots, w_n, t_n]) = (t_0^{1/a_0}w_0, \dots, t_n^{1/a_n}w_n).
$$
To check that $\psi$ is well-defined, suppose $(w_0, t_0, \dots, w_n, t_n) \sim (w_0', t_0', \dots, w_n', t_n')$ as in \eqref{eq: joindef}. Then $t_j = t_j'$ for all $j$ and $w_j=w_j'$ for $t_j > 0$. When $t_j=0$, we have $t_j^{1/a_j}w_j = 0 = t_j'^{1/a_j}w_j'$, so $\psi$ does not depend on the choice of representatives. Writing $\psi([w_0, t_0, \dots, w_n, t_n]) = (z_0, \dots, z_n)$ for convenience, we see that $z_0^{a_0} + \cdots + z_n^{a_n} = \sum_j t_j = 1$, and $\mathcal{R}_{m_j}^{a_j}(z_j) = t_j^{1/a_j}\mathcal{R}_{m_j}^{a_j}(w_j) = z_j$. We conclude that $\psi$ is well-defined. 

It is straightforward to see that $\phi$ and $\psi$ are continuous, and a direct computation shows that $\phi \circ \psi = \id_{L_{m_0} * \cdots * L_{m_n}}$ and $\psi \circ \phi = \id_{\widetilde L_{\mathbf m}}$. Therefore $\widetilde L_{\mathbf m}$ is homeomorphic to the join $L_{m_0} * \cdots * L_{m_n}$.

\vspace{0.5em}
\textbf{Case 2.} \emph{Assume that $L_{m_j}$ is empty for some $j$.} Without loss of generality, we may assume that $L_{m_0}, \dots, L_{m_k}$ are empty while the others are not empty for some $0 \leq k \leq n-1$. (It is not possible that $k = n$.) In this case, by Lemma~\ref{lem: emptyforsomej}, if ${\mathbf z} \in \widetilde L_{\mathbf m}$, then $z_j = 0$ for each $0 \leq j \leq k$. In particular $\sum_{j=k+1}^n z_j^{a_j} = 1$.

As in the previous case, we define a map $\phi\colon \widetilde L_{\mathbf m} \rightarrow  L_{m_{k+1}} * \cdots * L_{m_n}$ by $$\phi ({\mathbf z}) = [w_{k+1}, t_{k+1}, \dots, w_n, t_n]$$ where $w_j = (1/|z_j|) z_j$  and $t_j =  z_j^{a_j}$. 
From the above observation, $\sum_{j=k+1}^n t_j = \sum_{j=k+1}^n z_j^{a_j} = 1$, 
and $\phi$ is again a well-defined continuous function. Conversely, define a map $\psi\colon L_{m_{k+1}} * \cdots * L_{m_n} \rightarrow \widetilde L_{\mathbf m}$ by
$$
\psi ([w_{k+1}, t_{k+1}, \dots, w_n, t_n]) = (0, \dots, 0, t_{k+1}^{1/a_{k+1}}w_{k+1}, \dots, t_n^{1/a_n}w_n).
$$ 
Then we have $\phi \circ  \psi = \id_{L_{m_{k+1}} * \cdots * L_{m_n}}$. Indeed, for $[w_{k+1}, t_{k+1}, \dots, w_n, t_n] \in L_{m_{k+1}} * \cdots * L_{m_n}$,
\begin{align*}
(\phi \circ \psi) ([w_{k+1}, t_{k+1}, \dots, w_n, t_n]) &= \phi (0, \dots, 0, t_{k+1}^{1/a_{k+1}} w_{k+1}, \dots, t_n^{1/a_n} w_n) \\
&= [(1/|w_{k+1}|)w_{k+1}, t_{k+1}, \dots, (1/|w_n|)w_n, t_n] \\
&=[w_{k+1}, t_{k+1}, \dots, w_n, t_n]
\end{align*}
where we used the condition $w_j^{a_j} = 1$ which implies $|w_j|=1$.
Conversely, for ${\mathbf z} \in \widetilde L_{\mathbf{m}}$,
\begin{align*}
(\psi \circ \phi) ({\mathbf z}) &= (\psi \circ \phi) (0, \dots, 0, z_{k+1}, \dots, z_n) \\
			& = \psi([(1/|z_{k+1}|)z_{k+1}, z_{k+1}^{a_{k+1}}, \dots, (1/|z_{n}|)z_{n}, z_{n}^{a_{n}}]) \\
			&= (0, \dots, 0, z_{k+1}, \dots, z_n).
\end{align*}
Here, note that $(z_j^{a_j})^{1/a_j} = |z_j|$ for $k+1 \leq j \leq n$. Therefore $\psi \circ \phi = \id_{\widetilde L_{\mathbf m}}$. We conclude that $\widetilde L_{\mathbf m}$ is homeomorphic to the join $L_{m_{k+1}} * \cdots * L_{m_n}$. This completes the proof.
\end{proof}

Recall that the join of two spheres $S^{n} * S^{m}$ is homotopy equivalent to the higher dimensional sphere $S^{n+m+1}$ and that the join of any space with a point is homotopy equivalent to a point. Combining Theorem~\ref{thm:top_real} with Proposition~ \ref{prop: 0dimL}, the homotopy type of our real Lagrangians is determined as follows.

\begin{corollary}
The homotopy type of a real Lagrangian $L_{\mathbf m}$ is either the empty set, a one point set (i.e. contractible), or the $k$-dimensional sphere $S^{k}$ with $0 \leq k \leq n$.
\end{corollary}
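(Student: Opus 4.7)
The plan is to combine the join decomposition in Theorem~\ref{thm:top_real} with the classification of the zero-dimensional fixed sets $L_{m_j}$ from Proposition~\ref{prop: 0dimL}, and then apply standard homotopy-theoretic properties of the join. Concretely, Theorem~\ref{thm:top_real} reduces the question to identifying the homotopy type of the join $L_{m_0} * L_{m_1} * \cdots * L_{m_n}$ in which, by Proposition~\ref{prop: 0dimL}, each factor is either $\emptyset$, a single point, or the two-point set $S^0$.

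First I would record the join identities I need. By the convention stated above, $X * \emptyset = X$ for any topological space $X$, so any empty factor may be discarded. For a point, $X * \{\mathrm{pt}\}$ is homeomorphic to the cone $CX$ and hence contractible. Finally, $S^0 * S^0 \simeq S^1$, and inductively
\[
\underbrace{S^0 * S^0 * \cdots * S^0}_{k+1 \text{ factors}} \;\simeq\; S^{k}.
\]
With these in hand, I would split into three cases. If $L_{m_j} = \emptyset$ for every $j$, then $L_{\mathbf m} = \emptyset$. Otherwise, let $J = \{j : L_{m_j} \neq \emptyset\}$; by the convention, $L_{\mathbf m} \simeq \ast_{j \in J} L_{m_j}$. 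If some factor indexed by $J$ is a single point, the join is a cone and hence contractible. In the remaining case every $L_{m_j}$ with $j \in J$ is $S^0$, and writing $|J| = k+1$ for some $0 \leq k \leq n$ yields $L_{\mathbf m} \simeq S^k$.

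There is no genuine obstacle here beyond careful bookkeeping of the cases; all the nontrivial content is already packaged into Theorem~\ref{thm:top_real}. The only points requiring attention are the correct use of the convention $X * \emptyset = X$ (to reduce to the nonempty factors) and the bound on the sphere dimension, which reflects the fact that the join has at most $n+1$ nonempty $S^0$-factors, giving $0 \leq k \leq n$.
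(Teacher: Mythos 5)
Your argument is correct and mirrors the paper exactly: both combine Theorem~\ref{thm:top_real} with Proposition~\ref{prop: 0dimL}, discard empty factors via the convention $X * \emptyset = X$, use that joining with a point gives a cone (hence contractible), and use the iterated identity $S^0 * \cdots * S^0 \simeq S^k$. The case split and the bound $0 \le k \le n$ from at most $n+1$ nonempty $S^0$-factors are precisely the reasoning the paper invokes.
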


\begin{example}
Recall that the Milnor fiber $V(2, \dots, 2)$ is identified with the cotangent bundle $T^*S^n$ as in Example~\ref{ex: invol_quad}.
Consider the real Lagrangian $L_{(0, \dots, 0)}$ which is the fixed point set of the involution $\mathcal{R}_{(0, \dots, 0)}^{(2, \dots, 2)} ({\mathbf z}) = (\overline z_0, \dots, \overline z_n)$.
By Theorem~\ref{thm:top_real}, it is homotopy equivalent to the join $L_0 * \cdots * L_0$. Note that $L_0$ is the fixed point set of the involution $\mathcal{R}_0^2 (z_j) = \overline z_j$ for each $j$ on $V(2)$. Therefore $L_0 \simeq S^0$. We deduce that
$$
L_{(0, \dots, 0)} \simeq L_0 * \cdots * L_0 \simeq S^0 * \cdots * S^0 \simeq S^{n}.
$$ 
This fits with the fact that $L_{(0, \dots, 0)}$ can be identified with the zero section $S^n$ of the cotangent bundle $T^*S^n$.


Similarly, the homotopy type of the real Lagrangian $L_{(0, 1, \dots, 1)}$ can be found using Theorem~\ref{thm:top_real} as
$$
L_{(0, 1, \dots, 1)} \simeq L_0 * L_1 * \cdots * L_1 \simeq S^0 * \emptyset * \cdots * \emptyset \simeq S^0.
$$
Indeed, $L_{(0, 1, \dots, 1)}$ is the disjoint union of two fibers of $T^*S^n$ under the identification $V(2, \dots, 2) = T^*S^n$.
\end{example}





For later use, we remark another useful consequence of Theorem~\ref{thm:top_real}. 
\begin{corollary} \label{cor: real_Lag_app_contrac}
Consider a Milnor fiber $V(a_0, \dots, a_n)$. If $a_j$ is odd for some $j$, then the fixed point set of any involution $\mathcal{R}_{\mathbf m}^{\mathbf a}$ is contractible. If $a_j$ is even for all $j$, then there is an involution $\mathcal{R}_{\mathbf m}^{\mathbf a}$ whose fixed point set is homotopy equivalent to $S^0$. 
\end{corollary}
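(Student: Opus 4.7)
The plan is to apply Theorem~\ref{thm:top_real} directly and to trace through the two elementary facts about joins: $X * \{\mathrm{pt}\}$ is the (unreduced) cone $CX$, hence contractible, and $X * \emptyset = X$ by the convention fixed after the statement of Theorem~\ref{thm:top_real}. Together with Proposition~\ref{prop: 0dimL} and the case analysis in the proof of Lemma~\ref{lem: emptyforsomej}, which identifies each $L_{m_j}$ as (i) a one-point set when $a_j$ is odd, (ii) the two-point set $S^0$ when $a_j$ is even and $m_j$ is even, or (iii) empty when $a_j$ is even and $m_j$ is odd, this will be enough.

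For the first assertion, suppose some $a_{j_0}$ is odd. I would note that case (i) forces $L_{m_{j_0}}$ to be a single point for \emph{every} choice of $m_{j_0}$, regardless of the remaining components of $\mathbf m$. By Theorem~\ref{thm:top_real},
\[
L_{\mathbf m} \simeq L_{m_0} * \cdots * L_{m_{j_0}} * \cdots * L_{m_n},
\]
and after discarding any empty factors via $X * \emptyset = X$, the resulting join still contains the point $L_{m_{j_0}}$ as a factor. Since the join of an arbitrary space with a point is contractible, this yields the desired conclusion.

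For the second assertion, suppose all $a_j$ are even. I would make the explicit choice $\mathbf m = (0,1,1,\dots,1)$. Then case (ii) gives $L_{m_0} \simeq S^0$, while case (iii) gives $L_{m_j} = \emptyset$ for every $j \geq 1$. Applying Theorem~\ref{thm:top_real} and the convention $X * \emptyset = X$ iteratively,
\[
L_{\mathbf m} \simeq S^0 * \emptyset * \cdots * \emptyset \simeq S^0,
\]
as required.

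There is no substantial obstacle here: the proof is a bookkeeping exercise once Theorem~\ref{thm:top_real} is available, and the only care needed is to apply the correct convention for joins involving the empty set and to recall that coning kills homotopy type.
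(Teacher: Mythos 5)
Your proof is correct and follows essentially the same approach as the paper: apply Theorem~\ref{thm:top_real}, identify the three cases for each $L_{m_j}$ (point when $a_j$ odd, $S^0$ when $a_j, m_j$ both even, empty when $a_j$ even and $m_j$ odd), and use that joining with a point cones off the space while joining with the empty set is the identity, with the same explicit choice $\mathbf m = (0,1,\dots,1)$ for the second assertion. No gaps, and nothing different from the paper's own argument.
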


\begin{proof}
For the first assertion, suppose that $a_0$ is odd without loss of generality. For any involution $\mathcal{R}_{\mathbf m}^{\mathbf a}$,
we know from Theorem~\ref{thm:top_real} that
$$
L_{\mathbf{m}} \simeq L_{m_0} * L_{m_1} * \cdots * L_{m_n}.
$$
Since $a_0$ is odd, $L_{m_0} \subset V(a_0)$ is a one point set. Therefore the join $L_{m_0} * L_{m_1} * \cdots * L_{m_n}$ has the homotopy type of a point. In other words, $L_{\mathbf m}$ is contractible.

Suppose that $a_j$ is even for all $j$. In this case, the real Lagrangian $L_{m_j}$ is either empty or a two point set; more precisely, $L_{m_j} = \emptyset$ if $m_j$ is odd, and $L_{m_j} = S^0$ if $m_j$ is even; see Figure~\ref{fig: reflections}. In particular, by Theorem~\ref{thm:top_real},
$$
L_{(0, 1, \dots, 1)} \simeq L_0 * L_1 * \cdots * L_1 \simeq S^0 * \emptyset * \cdots * \emptyset \simeq S^0
$$
as we wanted.
\end{proof}

\section{Volume growth of fibered twists}

\subsection{Fibered twists} \label{sec: deffiberedtwists} Let $(W, \lda)$ be a Liouville domain such that the contact form $\alpha = \lda|_{\p W}$ on the boundary $\p W$ induces a \emph{periodic} Reeb flow of period $T_P$. We assume $\Ho_c^1(\widehat W; \R) = 0$ so that the compactly supported Hamiltonian diffeomorphism group $\Ham^c(\widehat W)$ coincides with the component of the identity in the compactly supported symplectomorphism group $\Symp^c(\widehat W)$. 

Take a Hamiltonian $H_{\vartheta}\colon \widehat W \rightarrow \R$ satisfying the following properties: For a sufficiently small $\epsilon >0$, $H_{\vartheta} \equiv 0$ on $W \setminus ([1-\epsilon, 1] \times \p W)$, $H_{\vartheta}(r, y)$ depends only on $r$ and is convex in $[1-\epsilon, \infty) \times \p W$, and $H_{\vartheta}'(r) = T_P$ for all $r \in [1, \infty)$. The time-$1$ flow $\vartheta : = \phi_{H_{\vartheta}}^1\colon \widehat W \rightarrow \widehat W$ of the Hamiltonian $H_{\vartheta}$ is called a \emph{fibered twist}. Since the Reeb flow is periodic, $\vartheta$ is  compactly supported. The connected component $[\vartheta] \in \pi_0(\Symp^c(\widehat W))$ does not depend on the choice of the Hamiltonian $H_{\vartheta}$. Geometrically, $\vartheta$ twists $\widehat W$ along the periodic Reeb flow on $\p W$. 

Now, consider a Brieskorn polynomial $f\colon \C^{n+1}  \rightarrow \C$ 
$$
f({\mathbf z}) = z_0^{a_0} + z_1^{a_1} + \cdots + z_n^{a_n}.
$$ 
We define a deformation $W(\mathbf{a})$ of the Milnor fiber $V(\mathbf{a})$ deformed near the origin by
$$
W(\mathbf{a}) = W(a_0, \dots, a_n) : =\{{\mathbf z} \in \C^{n+1} \;|\; f({\mathbf z}) = \rho(|{\mathbf z}|)\} \cap B^{2n+2}
$$
where $\rho\colon \R \rightarrow \R$ is a smooth monotone decreasing function such that $\rho(s) = 1$ near $s = 0$ and $\rho(s) = 0$ near $s =1$. We still call $W(\mathbf{a})$ a \emph{Milnor fiber} of the Brieskorn polynomial. As shown in \cite[Proposition~99]{Fau16}, $W(\mathbf{a})$ is a Liouville domain with the following (weighted) Liouville form
$$
\lda_{\mathbf{a}} : = \frac{1}{2}\sum_{j=0}^{n} a_j(x_j d y_j - y_j dx_j).
$$
The contact boundary $\p W(\mathbf a) = f^{-1}(0) \cap S^{2n+1}$ is called the \emph{link} of the singularity $f$. Denote the induced contact form on $\p W(\mathbf a)$ by $\alpha_{\mathbf a} : = \lda_{\mathbf a}|_{\p W(\mathbf a)}$. A direct computation shows that the corresponding Reeb flow is given by coordinate-wise rotations
\begin{equation}\label{eq: Reebflowformular}
{\mathbf z} \mapsto (e^{it/a_0}z_0, e^{it/a_1}z_1, \dots, e^{it/a_n}z_n).
\end{equation}
In particular, the Reeb flow is periodic with period $T_P = 2 \pi \lcm_{j} a_j$. Therefore, we obtain a fibered twist $\vartheta$ for each Milnor fiber $W(\mathbf a)$.

\begin{remark}
More generally, fibered twists can be defined on Milnor fibers of weighted homogeneous polynomials. See \cite[Section~5]{Se00}.
\end{remark}



\subsection{Growth rate of the real Lagrangians}\label{sec: growthrateofthereallagrangians}  The results on the topology of real Lagrangians in Section~\ref{sec: real_Bries} directly apply to the deformed domain $W(\mathbf{a})$. Indeed, since $\rho$ is a real-valued function, the map $\mathcal{R}_{\mathbf{m}}^{\mathbf{a}}\colon \C^{n+1} \rightarrow \C^{n+1}$ still defines an exact anti-symplectic involution on $W(\mathbf{a})$ for the same reason as in Lemma~\ref{lem: showing_involution}; see also Remark \ref{rem: restrictstotheball}. The proof of \cite[Lemma~4.10]{Se00} shows that there is a diffeomorphism between $W(\mathbf{a})$ and $V(\mathbf{a}) \cap B^{2n+2}$ intertwining the involutions on both sides. Therefore we obtain a correspondence between the real Lagrangians for $\mathcal{R}_{\mathbf{m}}^{\mathbf{a}}$ in $W(\mathbf{a})$ and the ones in $V(\mathbf{a}) \cap B^{2n+2}$ under the diffeomorphism. From now on, we will denote the corresponding real Lagrangians in $W(\mathbf{a})$ by the same letter $L_{\mathbf m}$. In this section, we investigate the growth rate of these Lagrangians $L_{\mathbf m}$ via the Smith inequality in Corollary~\ref{cor: locHW}.

\subsubsection{Stably trivial normal structures for real Lagrangians} \label{sec: stabilytrivialreal} Let $W = W(a_0, \dots, a_n)$ be a Milnor fiber with the Liouville form $\lda_{\mathbf a}$. The map $\sigma\colon W \rightarrow W,\; \sigma({\mathbf z}) = (z_0, \dots, z_{n-1}, -z_n)$ defines an exact symplectic involution on $W$.
Its fixed point set $S = \Fix \sigma$ is given by $\{{\mathbf z} \in W \;|\; z_n = 0\}$, and $S$ can be identified with the lower dimensional Milnor fiber $W(a_0, \dots, a_{n-1})$. Let $\mathcal{R}_{\mathbf{m}}^{\mathbf a} = \mathcal{R}_{(m_0, \dots, m_n)}^{(a_0, \dots, a_n)}\colon W \rightarrow W$ be an anti-symplectic involution, as studied in Section~\ref{sec: realLag}, and $L$ a connected component of the fixed point set of $\mathcal{R}_{\mathbf{m}}^{\mathbf a}$. If $L$ has a boundary, then it is an admissible Lagrangian in $W$. Since $\mathcal{R}_{\mathbf{m}}^{\mathbf a}$ commutes with $\sigma$, the intersection $L_S : = L \cap S$ is the fixed point set of the restriction $\mathcal{R}_{\mathbf{m}}^{\mathbf a}|_{S} = \mathcal{R}_{(m_0, \dots, m_{n-1})}^{(a_0, \dots, a_{n-1})} \colon S \rightarrow S$, which fits into the following diagram.
$$
\begin{tikzcd}
 W(a_0, \dots, a_{n-1}) \arrow[r, hook, "z_n = 0"] \arrow[d, "\mathcal{R}_{\mathbf m}^{\mathbf a}|_{\{z_n = 0\}}"]
    & W(a_0, \dots, a_{n-1}, a_n) \arrow[d, "\mathcal{R}_{\mathbf m}^{\mathbf a}"] \\
  {W(a_0, \dots, a_{n-1})} \arrow[r, hook, "z_n = 0"]
& W(a_0, \dots, a_{n-1}, a_n) \end{tikzcd}
$$

Let $H\colon W \rightarrow W$ be a $\sigma$-invariant admissible Hamiltonian (i.e. the restriction to $W$ of an admissible Hamiltonian on $\widehat W$ which is linear on $\R_{\geq 1} \times \p W$), and denote the time-$1$ Hamiltonian flow by $\phi_H$. The image $\phi_H(L)$ is an admissible Lagrangian in $W$, and the intersection $\phi_H(L) \cap S$, which is an admissible Lagrangian in $S$, is equal to the image $\phi_H(L_S) = \phi_{H_S}(L_S)$ due to the $\sigma$-invariance of $H$.

\begin{theorem}\label{thm: snqmainthm}
In the above setup with $n \geq 3$, if $L_S$ is contractible, then the quadruple  $(W, L, \phi_H(L), \sigma)$ admits a stably trivial normal structure.
\end{theorem}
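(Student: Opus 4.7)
The plan is to construct the data $(\Phi, \Lambda_0, \Lambda_1)$ explicitly, using the contractibility of $L_S$ to kill all relevant homotopical obstructions, in the spirit of Hendricks~\cite{Hen12}. Since $\sigma(\mathbf{z}) = (z_0, \ldots, z_{n-1}, -z_n)$ is complex linear, the fixed set $S = \{z_n = 0\} \cap W \cong W(a_0, \ldots, a_{n-1})$ has complex codimension one, and the coordinate function $z_n$ provides a canonical unitary trivialization of the anti-invariant normal bundle $N_W S$ (after a harmless rescaling adapted to the weighted Liouville form). Thus $n_{\anti} = 1$, and I take $\Phi = \Phi_0 \oplus \id_{\C^N} \colon TW^{\anti} \oplus \C^N \to \C^{1+N}$ for a stabilization $N \geq 0$ to be fixed below.

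Next I identify the real Lagrangian data. The restriction of $\mathcal{R}_{\mathbf m}^{\mathbf a}$ to the $z_n$-direction is $z_n \mapsto e^{2m_n \pi i / a_n} \overline{z_n}$, so $TL^{\anti} = N_L L_S$ is the trivial real line subbundle $\R \cdot e^{m_n \pi i / a_n}$ of $\C = TW^{\anti}|_{L_S}$. For the moving Lagrangian $L_1 = \phi_H(L)$, the $\sigma$-invariance of $H$ forces $\phi_H$ to preserve the splitting $TW|_S = TS \oplus N_W S$, so $TL_1^{\anti} = d\phi_H(TL^{\anti})$, which is again a real line subbundle of $\C$ over $\phi_{H_S}(L_S)$.

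The Lagrangian subbundles $\Lambda_0, \Lambda_1$ are then built by interpolation. Because $L_S$ is contractible by hypothesis, so are $\phi_{H_S}(L_S) \cong L_S$ and both cylinders $[0,1] \times L_S$ and $[0,1] \times \phi_{H_S}(L_S)$; over such bases every complex or Lagrangian subbundle is trivial, and the space of Lagrangian subbundles of $\C^{1+N}$ with prescribed endpoint values is nonempty (and in fact contractible) as soon as those endpoints lie in the path-connected Lagrangian Grassmannian $U(1+N)/O(1+N)$. For $\Lambda_0$, the rotation path $t \mapsto \R \cdot e^{(1-t) m_n \pi i / a_n}$ stabilized by $\R^N$ connects $TL^{\anti} \oplus \R^N$ at $t=0$ to $\R^{1+N}$ at $t=1$. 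The analogous construction yields $\Lambda_1$, connecting $i\R^{1+N}$ at $t=0$ to $TL_1^{\anti} \oplus i\R^N$ at $t=1$.

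The main obstacle is verifying that these pointwise interpolations actually assemble into smooth Lagrangian subbundles over the two cylinders, and that the unitary and Lagrangian data glue coherently with the stabilization. This is exactly what contractibility of $L_S$ delivers: every relevant bundle over $[0,1] \times L_S$ or $[0,1] \times \phi_{H_S}(L_S)$ is trivial, so there is no topological obstruction to globalizing the prescribed paths. The hypothesis $n \geq 3$ ensures that $S = W(a_0, \ldots, a_{n-1})$ is $(n-2)$-connected and in particular simply connected, which is what one needs to choose $\Phi$ globally as a unitary trivialization over the whole of $S$ (rather than only over $L_S$). Contractibility of $L_S$ itself is provided in the relevant cases by the topological description of real Lagrangians in Theorem~\ref{thm:top_real} and Corollary~\ref{cor: real_Lag_app_contrac}.
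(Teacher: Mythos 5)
Your proposal attempts a direct, explicit construction of the stably trivial normal structure, which is a genuinely different route from the paper, but it contains a substantive gap that the paper's proof is specifically designed to handle.

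The issue is where the difficulty actually lives. You construct the Lagrangian subbundles $\Lambda_0, \Lambda_1$ over the two cylinders $[0,1]\times L_S$ and $[0,1]\times \phi_{H_S}(L_S)$, and correctly note that over those contractible bases everything is trivial and interpolation is unobstructed. But the unitary trivialization $\Phi$ in the definition lives over all of $[0,1]\times S$, not over the Lagrangian cylinders, and the endpoint constraints $\Phi(\Lambda_0|_{\{0\}\times L_0^{\mathrm{inv}}})=\R^{n_{\anti}+N}$, $\Phi(\Lambda_1|_{\{1\}\times L_1^{\mathrm{inv}}})=i\R^{n_{\anti}+N}$ impose conditions on $\Phi$ over the subspace $X = (\{0\}\times L_S)\sqcup(\{1\}\times \phi_H(L_S))$. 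The genuine obstruction is therefore whether a global trivialization of $TW^{\anti}\oplus\C^N$ over $[0,1]\times S$ can be chosen compatibly with those conditions over $X$ — equivalently, whether the relative class $[\Upsilon(S)_{\mathrm{rel}}]\in\widetilde K^0(([0,1]\times S)/X)$ vanishes. Since $S\simeq\bigvee S^{n-1}$ is \emph{not} contractible, this is not automatic, and your argument never engages with it. The paper deals with it head-on: it shows $c_1(N_WS)=0$ using $c_1(TW)=c_1(TS)=0$ for Milnor fibers (your proposal never invokes this), then checks that $\Ho^*(([0,1]\times S)/X)$ is torsion-free and finitely generated so that Atiyah--Hirzebruch (via Hendricks' Proposition~6.10) upgrades vanishing Chern classes to vanishing in $\widetilde K^0$. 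Your replacement — claiming that simple connectivity of $S$ (from $n\ge 3$) lets one "choose $\Phi$ globally" — is not correct: simple connectivity of a base does not imply triviality of a complex line bundle over it, so this step is a non sequitur. The paper instead uses $n\ge 3$ for an unrelated purpose, namely to force $\widetilde\Ho^1([0,1]\times S)=0$ when computing $\Ho^1$ of the cofiber.

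A secondary concern: the claimed "canonical unitary trivialization" of $N_WS$ by $z_n$ is plausible as a complex-linear trivialization, but the deformed Milnor fiber $W(\mathbf a)$ is not a holomorphic hypersurface (because of the cutoff $\rho$), so the compatible $J$ is not the ambient standard one, and "after a harmless rescaling adapted to the weighted Liouville form" is an assertion, not an argument. The paper avoids this subtlety entirely by using the Chern class computation rather than an explicit trivialization. In short, the key idea that makes the theorem work — reducing the existence question to vanishing in relative $K$-theory and then to cohomological conditions that hold because Milnor fibers are bouquets of spheres with trivial $c_1$ — is missing from your argument.
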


\begin{proof}
We mimic the argument in \cite[Section~7]{Hen12} which provides a practical way to guarantee the existence of a stably trivial normal structure. 

We divide the argument into four steps. To state the first step, consider the projection $S \times [0, 1] \rightarrow S$, and let $\Upsilon(S)$ be the pullback bundle over $S \times [0,1]$ of the normal bundle $N_W S \rightarrow S$ as in the following diagram.
$$
\begin{tikzcd}
  \Upsilon(S) \arrow[r, dashed] \arrow[d]
    & N_{W}S \arrow[d] \\
  {[0, 1] \times S} \arrow[r]
& S \end{tikzcd}
$$
Comparing with the notation of Section~\ref{sec: def_stabnormal}, note that
\begin{align*}
&TW^{\anti} = \Upsilon(S), \\ 
&L_0 = L, \quad L_1 = \phi_H(L), \\
&L_0^{\inv} = L_S, \quad L_1^{\inv} = \phi_H(L_S).
\end{align*}

\textbf{Step 1}: \emph{The pullback bundle $\Upsilon(S)$ is trivial.} (This step corresponds to \cite[Lemma~7.2]{Hen12}.) Observe that the normal bundle $N_W S$ is a complex line bundle (with respect to a compatible almost complex structure on $W$). The first Chern class $c_1(N_W S)$ satisfies
$$
c_1(TW|_S) = c_1(N_W S) + c_1(TS).
$$
Since $W$ and $S$ are both Milnor fibers, $c_1(TW)$ and $c_1(TS)$ are trivial; see \cite[Corollary~4.13]{Se00}. It follows from the above equation that $c_1(N_W S)$ is likewise trivial. Therefore the line bundle $N_W S$, and hence the pullback bundle $\Upsilon(S)$ is trivial.

\textbf{Step 2}: \emph{The normal bundles $N_{L}L_S$ and $N_{\phi_H(L)} \phi_H(L_S) $ are trivial.} (This step corresponds to \cite[Proposition~7.3]{Hen12}.) Note that $N_L L_S$ is a vector bundle over $L_S$ and that the base space $L_S$ is assumed to be contractible. Therefore $N_L L_S$ is a trivial bundle. Since $\phi_H$ is a diffeomorphism, the image $\phi_H(L_S)$ is also contractible. Therefore the bundle $N_{\phi_H(L)} \phi_H(L_S)$ over it is trivial.

For the next step, let us introduce the subset $X$ of $[0, 1] \times S $ given by
$$
X : = (\{0\} \times L_S) \sqcup (\{1\} \times \phi_H(L_S) ) \subset [0, 1] \times S.
$$

\textbf{Step 3:} \emph{The homomorphism, induced by the inclusion $X \rightarrow [0, 1] \times S$, 
$$
\Ho^i([0, 1] \times S) \rightarrow \Ho^i(X)
$$
is surjective for $i \geq 1$.} As in the previous step, the Lagrangians $L_S$ and $\phi_H(L_S)$ are contractible. Therefore, $X$ has the homotopy type
of two points. It follows that $\Ho^i(X)$ is trivial for $i \geq 1$, and the homomorphism is obviously surjective as asserted.

By borrowing the notation and the argument from  \cite[Proof of Theorem~3.11]{Hen12}, Step 3 shows that the Chern classes of the relative vector bundle $\Upsilon(S)_{\mathrm{rel}}$ are trivial.
Now, we need to show further that the relative vector bundle $[\Upsilon(S)_{\mathrm{rel}}] \in \widetilde K^0(([0, 1] \times S)/X)$ is trivial; this finally implies that $\Upsilon(S)$ carries a stably trivial normal structure. By \cite[Proposition~6.10]{Hen12} (Atiyah--Hirzebruch \cite[Section~2.5]{AtiHir61}), it is now sufficient to prove the following.

\textbf{Step 4}: \emph{For each $i \geq 0$, the cohomology $\Ho^i(([0, 1] \times S)/ X)$ is torsion-free and finitely generated.} Consider the long exact sequence of the pair $([0, 1] \times S,  X)$:
$$
\Ho^{i-1}(X) \rightarrow \Ho^i([0, 1] \times S, X) \rightarrow \Ho^i([0, 1] \times S) \rightarrow \Ho^i(X).
$$
Since $X$ has the homotopy type of two points, it follows that 
$$
\Ho^i(([0, 1] \times S) /X) \cong \Ho^i([0, 1] \times S, X) \cong \Ho^i([0, 1] \times S) \cong \Ho^i(S)
$$
for $i \geq 2$. Since $S$ is a Minor fiber of an isolated complex hypersurface singularity, $S$ is homotopy equivalent to a bouquet of $S^{n-1}$ by \cite[Theorem~6.5]{Mil68}. In particular, $\Ho^i(S)$ is torsion free and finitely generated, and hence so is $\Ho^i(([0, 1] \times S) /X)$ for $i \geq 2$. For the case $i=1$, we consider the following relevant part of the exact sequence
$$
\widetilde{\Ho}^0([0, 1] \times S) \rightarrow \widetilde{\Ho}^0(X) \rightarrow \widetilde{\Ho}^1(([0, 1] \times S)/X) \rightarrow \widetilde{\Ho}^1([0, 1] \times S).
$$
From the facts that $[0, 1] \times S$ is connected, $X$ has the homotopy type of two points, and that $S$ is has the homotopy type of a bouquet of $S^{n-1}$, we deduce
$$
{\Ho}^1(([0, 1] \times S)/X) = \widetilde{\Ho}^1(([0, 1] \times S)/X) \cong \widetilde{\Ho}^0(X) \cong \Z
$$
which is torsion free and finitely generated; here, we use the assumption that $n \geq 3$ to guarantee that $\widetilde{\Ho}^1([0, 1] \times S)$ is trivial. The remaining case $i=0$ is obvious.
\end{proof}


Now we are in position to prove our main application of the Smith inequality in Theorem~\ref{thm: local_fHW}:

\begin{theorem}\label{thm: mainthmgrowthrateHW}
Let $S = W(a_0, \dots, a_{n-1})$ be a Milnor fiber with $n  \geq 3$ and with an involution $\mathcal{R}_{\mathbf{m}}^{\mathbf a}$ on $S$ such that $\Fix \mathcal{R}_{\mathbf{m}}^{\mathbf a}$ is contractible or homotopy equivalent to $S^0$. Let $L_S$ be a connected component of $\Fix \mathcal{R}_{\mathbf{m}}^{\mathbf a}$. Suppose that $\Lambda(L_S) > 0$. Then for any even numbers $b_0, \dots, b_{\ell-1} >0$ with $\ell \geq 1$, the Milnor fiber $W = W(a_0, \dots, a_{n-1}, b_0, \dots, b_{\ell -1})$ admits a contractible admissible Lagrangian $L$ with $\Lambda(L) > 0$.
\end{theorem}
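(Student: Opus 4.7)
The plan is to construct $L$ as a connected component of the fixed set of a carefully chosen anti-symplectic involution on $W$ and then iterate the Smith inequality through a tower of intermediate Milnor fibers. To begin, define $W_k := W(a_0, \dots, a_{n-1}, b_0, \dots, b_{k-1})$ for $k = 0, \dots, \ell$, so that $W_0 = S$ and $W_\ell = W$; each inclusion $W_{k-1} \subset W_k$ is cut out by $z_{n+k-1} = 0$. For each $j = 0, \dots, \ell - 1$, pick an odd integer $m'_j \in \{1, \dots, b_j - 1\}$ (possible since $b_j \geq 2$ is even); by Proposition~\ref{prop: 0dimL} the zero-dimensional fixed set $L_{m'_j} \subset V(b_j)$ is then empty. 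Let $\mathcal{R}' := \mathcal{R}_{\mathbf{m}'}^{\mathbf{a}'}$ denote the coordinatewise extension of $\mathcal{R}_{\mathbf m}^{\mathbf a}$ to $W$, with $\mathbf{m}' = (m_0, \dots, m_{n-1}, m'_0, \dots, m'_{\ell-1})$ and $\mathbf{a}' = (a_0, \dots, a_{n-1}, b_0, \dots, b_{\ell-1})$; by restriction $\mathcal{R}'$ defines an anti-symplectic involution on each $W_k$.

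Theorem~\ref{thm:top_real} combined with the convention $X * \emptyset = X$ then gives
\[
\Fix (\mathcal{R}'|_{W_k}) \; \simeq \; L_{m_0} * \cdots * L_{m_{n-1}} * L_{m'_0} * \cdots * L_{m'_{k-1}} \; \simeq \; L_{m_0} * \cdots * L_{m_{n-1}} \; \simeq \; \Fix \mathcal{R}_{\mathbf m}^{\mathbf a},
\]
so that $\Fix (\mathcal{R}'|_{W_k})$ is either contractible or homotopy equivalent to $S^0$ for every $k$; in particular it has one or two components, each contractible. Moreover, the deformation retractions of Lemma~\ref{lem: deformtononneg} sit inside $S$ independently of $k$, so the components of $\Fix (\mathcal{R}'|_{W_k})$ are in bijection with those of $\Fix \mathcal{R}_{\mathbf m}^{\mathbf a}$ via intersection with $S$. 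Starting with $L_0 := L_S$, I take $L_k$ to be the connected component of $\Fix (\mathcal{R}'|_{W_k})$ that contains $L_{k-1}$. This compatible choice satisfies $L_k \cap W_{k-1} = L_{k-1}$ for every $k \geq 1$, and $L := L_\ell$ is a contractible admissible Lagrangian in $W$.

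For the growth rate, consider on each $W_k$ with $k \geq 1$ the exact symplectic involution $\sigma_k(\mathbf z) := (z_0, \dots, z_{n+k-2}, -z_{n+k-1})$, which is well-defined because $b_{k-1}$ is even and satisfies $\Fix \sigma_k = W_{k-1}$. Since $\mathcal{R}'|_{W_k}$ commutes with $\sigma_k$, one has $L_k \cap \Fix \sigma_k = L_{k-1}$, which is contractible by the previous step. The inequality $n + k - 1 \geq n \geq 3$ then allows invoking Theorem~\ref{thm: snqmainthm}: for every $\sigma_k$-invariant admissible Hamiltonian $H$ on $\widehat{W}_k$ (the existence of which is guaranteed by Remark~\ref{rem: exofsiginvHam}), the quadruple $(W_k, L_k, \phi_H(L_k), \sigma_k)$ admits a stably trivial normal structure. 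Corollary~\ref{cor: locHW} then yields $\Lambda(L_k) \geq \Lambda(L_{k-1})$, and chaining these inequalities for $k = 1, \dots, \ell$ gives
\[
\Lambda(L) \; = \; \Lambda(L_\ell) \; \geq \; \Lambda(L_{\ell-1}) \; \geq \; \cdots \; \geq \; \Lambda(L_0) \; = \; \Lambda(L_S) \; > \; 0,
\]
completing the argument. The main subtlety is ensuring contractibility of the invariant Lagrangian at each stage, which is the hypothesis of Theorem~\ref{thm: snqmainthm}; this is precisely what is arranged by the choice of odd $m'_j$, which forces $L_{m'_j}$ to be empty and makes the iterated joins collapse onto $\Fix \mathcal{R}_{\mathbf m}^{\mathbf a}$ in $S$.
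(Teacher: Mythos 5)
Your proposal is correct and essentially reproduces the paper's argument: both pick odd indices $m'_j$ for the new even exponents $b_j$ (so that $L_{m'_j} = \emptyset$ and the join in Theorem~\ref{thm:top_real} collapses, forcing contractibility at each stage), and both apply Theorem~\ref{thm: snqmainthm} and Corollary~\ref{cor: locHW} one coordinate at a time with the reflection $\sigma_k$ in the last variable. The only difference is expository — the paper proves the $\ell=1$ case and then inducts on $\ell$, whereas you set up the full tower $W_0 \subset W_1 \subset \cdots \subset W_\ell$ and the extended involution $\mathcal{R}'$ at once and chain the inequalities — which amounts to the same proof.
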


\begin{proof}
We first prove the case when $\ell = 1$ i.e. $W = W(a_0, \dots, a_{n-1}, b_0)$. Note that the involution $\sigma\colon \C^{n+1} \rightarrow \C^{n+1}, \;  \sigma({\mathbf z}, w_0) = (z_0, \dots, z_{n-1}, -w_0)$ restricts to $W$ as $b_0$ is even, and $S = W(a_0, \dots, a_{n-1})$ can be seen as the fixed point set $\Fix \sigma = \{w_0 = 0\} \subset W$. For an integer $k_0$ with $0 \leq k_0 \leq b_0-1$, we consider the involution
$$
\mathcal{R}_{(\mathbf{m}, k_0)}^{(\mathbf{a}, b_0)} = \mathcal{R}_{(m_1, \dots, m_{n-1}, k_0)}^{(a_0, \dots, a_{n-1}, b_0)}\colon W \rightarrow W.
$$
Then the restriction of $\mathcal{R}_{(\mathbf{m}, k_0)}^{(\mathbf{a}, b_0)}$ to $S$ is the given involution $\mathcal{R}_{\mathbf{m}}^{\mathbf a}$, and $L_S$ is a connected component of the real Lagrangian $L_{\mathbf m}$. One can take $k_0$ such that a connected component $L$ of the real Lagrangian $\Fix \mathcal{R}_{(\mathbf{m}, k_0)}^{(\mathbf{a}, b_0)}$ satisfies $L \cap S = L_S$: For example, if $k_0 = 1$, then we have $L_{k_0}^{b_0} = \emptyset$  since $b_0$ is even; see Figure~\ref{fig:reflections_c}. In this case, $L_{(\mathbf{m}, k_0)} \cap S = L_{(\mathbf{m}, k_0)} \cap \{w_0 = 0\} = L_{\mathbf{m}}$. 
Now, since $L_S$ is contractible by assumption, the quadruple $(W, L, \phi_H(L), \sigma)$ admits a stably trivial normal structure by Theorem~\ref{thm: snqmainthm} for any $\sigma$-invariant admissible Hamiltonian $H$. It follows from Corollary~\ref{cor: locHW} that $\Lambda(L) \geq \Lambda(L_S) >0$ as asserted.

To deal with the general case, observe in the previous argument that the integer $k_0$ has been chosen so that the Lagrangian $L$ is contractible; this follows from Theorem~\ref{thm:top_real} with the condition $L_{k_0}^{b_0} = \emptyset$.  Consequently, the Milnor fiber $W = W(a_0, \dots, a_{n-1}, b_0)$ now satisfies all the assumptions for $S$ in the statement where the Lagrangian $L_S$ is replaced by $L$; we have shown that $\Lambda(L) > 0$ in the previous case. Therefore, for an additional even number $b_1$, the Milnor fiber $W(a_0, \dots, a_{n-1}, b_0, b_1)$ still admits a contractible admissible Lagrangian with positive linear growth rate in the wrapped Floer homology. In this way, we can apply the same argument inductively on $\ell \geq 1$, and this yields the general case.
\end{proof}

\begin{example}\label{ex: k+1222}
Consider the $A_k$-type Milnor fiber $W(k+1, 2, 2, 2)$ and the involution $\mathcal{R}_{\mathbf m} = \mathcal{R}_{(0, 1, 1, 1)}$ i.e.
$$
\mathcal{R}_{\mathbf m} ({\mathbf z}) = (\overline z_0, -\overline z_1,- \overline z_2, - \overline z_3).
$$
By Theorem~\ref{thm:top_real}, we can determine the homotopy type of the real Lagrangian $L_{\mathbf m}$:
$$
\displaystyle L_{\mathbf m}  \simeq L_{0} * L_1 *L_1* L_1 \simeq L_0 * \emptyset *  \emptyset * \emptyset \simeq \begin{cases} pt & \text{$k$ even},\\ S^0 & \text{$k$ odd}.    \end{cases}
$$
In fact, the Milnor fiber $W(k+1, 2, 2, 2)$ can be seen as a $k$-linear plumbing of the disk cotangent bundle $DT^*S^3$, and under this identification, $L_{\mathbf m}$ corresponds to one or two cotangent fibers; see \cite[Proposition~3.6]{BaKw21}. 

Let $L$ be a connected component of $L_{\mathbf m}$, which is contractible. As in \cite[Section~7]{KKL18}, we can explicitly compute the filtered wrapped Floer homology $\HW^a(L)$ of $L$ and it has positive linear growth rate $\Lambda(L) >0$. By Theorem~\ref{thm: mainthmgrowthrateHW}, it follows that any higher dimensional extension $W(k+1, 2, 2, 2, b_0, \dots, b_{\ell-1})$, with $b_j$ even for all $j$, admits a contractible admissible Lagrangian whose linear growth rate in wrapped Floer homology is positive; this includes the result in \cite[Corollary~7.2]{KKL18} on higher dimensional $A_k$-type Milnor fibers as a special case.

\end{example}

\subsection{Volume growth of fibered twists} \label{sec: volumegrwothoffiberedtwists} 

Let $(W, \lda)$ be a Liouville domain of dimension $2n$, with $\Ho^1_c(W; \R) = 0$, whose Reeb flow on the boundary is periodic. As in Section~\ref{sec: deffiberedtwists}, we have the connected component $[\vartheta] \in \pi_0(\Symp^c(\widehat W))$ of fibered twists. Now we prove Theorem \ref{thm: intro_volumegrowthviagrowthrate} asserting that the existence of a Lagrangian with positive linear growth rate yields a uniform lower bound for the $n$-dimensional slow volume growth in the component $[\vartheta]$.



\begin{proof}[Proof of Theorem~\ref{thm: intro_volumegrowthviagrowthrate}]
The main ingredient is the Crofton inequality in \cite[Lemma~5.3]{CiGiGu21} for Lagrangian tomographs. We utilize a slight variation formulated in \cite[Section~3]{BaLe22}. For each $m \geq 1$, denote 
$$
L_m : = \varphi^m(L).
$$
Note that $\p L_m = \p L$ for all $m \geq 1$ since $\varphi$ is the identity near the boundary $\p W$. We take a positive isotopy $\varphi_{\text{pos}}\colon W \rightarrow W$, in the sense of \cite[Section~5a]{KhoSei02}, namely $\varphi_{\text{pos}}$ is the time-1 Hamiltonian flow of a Hamiltonian $H$ such that $H$ is zero on the support of $\varphi$, convex near the boundary depending only on the Liouville coordinate $r$, and linear at the end with slope smaller than any Reeb chord on $(\p W, \alpha)$ relative to the Legendrian boundary $\p L$. Then the image
$$
L_m' : = \varphi_{\text{pos}}(L_m)
$$
does not intersect $L$ near the boundary $\p W$ for any $m \geq 1$ while $L_m'$ still coincides with $L_m$ away from the support of $\varphi_{\text{pos}}$. Therefore the completed Lagrangians $\widehat L$ and $\widehat L_m'$ in $\widehat W$ are disjoint on the symplectization part.
By \cite[Lemma~3.2]{BaLe22}, there is a Lagrangian tomograph (see also \cite[Section~5.2]{CiGiGu21}), which in particular means that we have a family of admissible Lagrangians $\{L_s\}_{s \in B}$, where $B$ is a closed ball of sufficiently large dimension, such that
\begin{enumerate}
\item $\widehat L_s$ is Hamiltonian isotopic to $\widehat L$ for each $s \in B$;
\item $\widehat L_s$ intersects  $\widehat L_m'$  transversely for each $m \ge 1$ and almost every $s \in B$ (depending on~$m$).
\end{enumerate}
See Figure~\ref{fig: tomograph} for a schematic description.
\begin{figure*}[h]
\begin{tikzpicture}[scale=1]

\draw[dashed, gray] (0,1.5)--(0,-2);
\node[right] at (0,-0.7) {$r=1$};

\draw [blue, thick] (-4,0)--(4,0);
\node at (-4.5,0) {\color{blue}$\widehat{L}$};
\node at (4.5,0) {\color{red}$\widehat{L}_m$};
\node at (-2,-1.7) {$W$};
\node at (2,-1.7) {$[1,\infty)\times \p W$};
\node[gray] at (-0.5,1) {$\widehat{L}_s$};

\draw [gray] plot [smooth] coordinates {
(-4,1) (-3,1) (-2,1) (-1, 0.8) (-0.2,0)
};
\draw [gray] plot [smooth] coordinates {
(-4,0.8) (-3,0.8) (-2,0.8) (-1, 0.6) (-0.2,0)
};
\draw [gray] plot [smooth] coordinates {
(-4,0.6) (-3,0.6) (-2,0.6) (-1, 0.4) (-0.2,0)
};
\draw [gray] plot [smooth] coordinates {
(-4,0.4) (-3,0.4) (-2,0.4) (-1, 0.2) (-0.2,0)
};
\draw [gray] plot [smooth] coordinates {
(-4,0.2) (-3,0.2) (-2,0.2) (-1, 0) (-0.2,0)
};

\begin{scope}[yscale=-1]
\draw [gray] plot [smooth] coordinates {
(-4,1) (-3,1) (-2,1) (-1, 0.8) (-0.2,0)
};
\draw [gray] plot [smooth] coordinates {
(-4,0.8) (-3,0.8) (-2,0.8) (-1, 0.6) (-0.2,0)
};
\draw [gray] plot [smooth] coordinates {
(-4,0.6) (-3,0.6) (-2,0.6) (-1, 0.4) (-0.2,0)
};
\draw [gray] plot [smooth] coordinates {
(-4,0.4) (-3,0.4) (-2,0.4) (-1, 0.2) (-0.2,0)
};
\draw [gray] plot [smooth] coordinates {
(-4,0.2) (-3,0.2) (-2,0.2) (-1, 0) (-0.2,0)
};	
\end{scope}

\begin{scope}[yshift=0.05cm]
\draw [red,thick] plot [smooth] coordinates {(-4,1) (-3.5,1) (-3,0) (-2.5,-1) (-2.2,0.5) (-1.7, -0.7) (-1.5,0) (-1,0) (-0.5,0) (0,0) (1,0) (4,0)};
\end{scope}

\begin{scope}[yshift=0.05cm]
\draw [thick,red] (-1,0) to[out=0,in=0] (-0.6,0) to[out=45,in=180] (-0.15,0.5) to (0.2, 0.5) to (4,0.5);
\node at (4.5,0.5) {\color{red}$\widehat{L}_m'$};
\end{scope}
\end{tikzpicture}
\caption{The geometric setup of the Lagrangian tomograph}
\label{fig: tomograph}
\end{figure*}
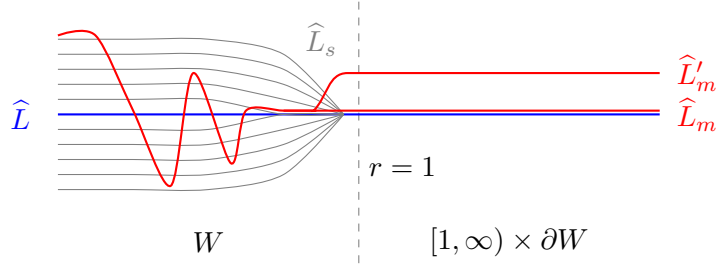
For each $s \in B$ with $\widehat L_s \pitchfork \widehat L_m'$, define
$$
N_m(s) : = |\widehat L_s \cap \widehat L_m'| = |L_s \cap L_m'|.
$$
Equip $B$ with the Lebesgue measure $ds$. The Crofton inequality in \cite[Lemma~3.4]{BaLe22} tells us that there is a constant $C > 0$ such that
\begin{equation}\label{eq: Crof}
\int_{B} N_m(s)ds \leq C \cdot \vol (L_m')
\end{equation}
where the constant $C$ depends on the family $\{L_s\}$, $ds$, and a fixed metric on $\widehat W$ which is compatible with $d\widehat \lda$, but not on $L_m'$. Now, for $k > 0$, we observe that
\begin{align*}
N_m(s) &\geq \dim \LFH(L_s, L_m') \\
             &= \dim \LFH(L, (\varphi_{\text{pos}} \circ \varphi^m)(L))\\
             &= \dim \LFH(L, (\varphi_{\text{pos}} \circ \vartheta^{mk})(L))\\
             &= \dim \HF(H^{mkT_P + \epsilon})\\
             &= \dim \HW^{mkT_P+\epsilon}(L).
\end{align*}
Here, $\LFH$ denotes the Lagrangian (intersection) Floer homology and $H^{mkT_P + \epsilon}$ is an admissible Hamiltonian of slope $mkT_P + \epsilon$ where $T_P$ is the period of the Reeb flow and $\epsilon > 0$ is a sufficiently small number determined by the choice of the positive isotopy $\varphi_{\text{pos}}$. As in \cite[Proof of Theorem~A]{KKL18}, we used the invariance of $\LFH$ under Hamiltonian isotopies and the assumption that $\Ho_c^1(W; \R) = 0$; this implies that $\phi^m(L)$ is Hamiltonian isotopic to $\vartheta^{mk}(L)$. Since $\Lambda(L) > 0$, we have 
\begin{align*}
\liminf_{m \rightarrow \infty} \frac{N_m(s)}{m} &\geq \liminf_{m \rightarrow \infty} \left(\frac{\dim \HW^{mkT_P+\epsilon}(L)}{mkT_P+\epsilon} \cdot \frac{mkT_P+\epsilon}{m} \right) \\ &\geq \liminf_{m \rightarrow \infty} \frac{\dim \HW^{mkT_P+\epsilon}(L)}{mkT_P+\epsilon}  \cdot   \liminf_{m \rightarrow \infty}\frac{mkT_P+\epsilon}{m} \\
&\geq \Lambda(L) \cdot kT_P > 0.
\end{align*}
It follows from the inequality \eqref{eq: Crof} that
$$
\liminf_{m \rightarrow \infty} \frac{\vol(L_m')}{m} > 0. 
$$
Since $L_m'$ is obtained from $L_m$ by applying the positive isotopy $\varphi_{\text{pos}}$ which is supported in a small compact region near the boundary $\p W$, not depending on $m$, this in fact implies that
$$
\liminf_{m \rightarrow \infty} \frac{\vol(L_m)}{m} > 0.
$$
Consequently, we have
$$
s_n(\varphi) \geq \liminf_{m \rightarrow \infty} \frac{\log \vol(L_m)}{\log m} \geq 1
$$
which completes the proof for $k > 0$. 

The case $k< 0$ follows from a straightforward adaptation of the previous argument taking into account the fact that the generating Hamiltonian of the inverse $\vartheta^{-1}$ is now given by $- H_{\vartheta}$; see e.g. \cite[Proof of Theorem~3.4]{CiOa18}. Indeed, using the Poincar\'{e} duality in filtered wrapped Floer homology and cohomology, namely
$$
\HW^{(-\infty, a)}(L) \cong \HW_{(-a, \infty)}(L)
$$
as e.g. in \cite[Theorem~3.4 and Section~8.3]{CiOa18} and \cite[Section~3.8]{Rit}, we obtain the inequality 
$$
N_m(s) \geq \dim \HW^{-(mkT_p + \epsilon)}(L).
$$ 
This yields 
\begin{align*}
\liminf_{m \rightarrow \infty} \frac{N_m(s)}{m} &\geq \liminf_{m \rightarrow \infty} \left(\frac{\dim \HW^{-(mkT_P+\epsilon)}(L)}{-(mkT_P+\epsilon)} \cdot \frac{-(mkT_P+\epsilon)}{m} \right) \\ &\geq \liminf_{m \rightarrow \infty} \frac{\dim \HW^{-(mkT_P+\epsilon)}(L)}{-(mkT_P+\epsilon)}  \cdot   \liminf_{m \rightarrow \infty}\frac{-(mkT_P+\epsilon)}{m} \\
&\geq \Lambda(L) \cdot (-kT_P) > 0.
\end{align*}
The rest of the proof is exactly the same as before.
\end{proof}

Returning to the setup of Theorem~\ref{thm: mainthmgrowthrateHW}, consider a Milnor fiber $S = W(a_0, \dots, a_{n-1})$, $n \geq 3$, with an involution $\mathcal{R}_{\mathbf{m}}^{\mathbf a}$ on $S$ such that $\Fix \mathcal{R}_{\mathbf{m}}^{\mathbf a}$ is contractible or homotopy equivalent to $S^0$. Let $L_S$ be a connected component of $\Fix \mathcal{R}_{\mathbf{m}}^{\mathbf a}$. We suppose that $\Lambda(L_S) > 0$. Let $b_0, \dots, b_{\ell-1} >0$ be even numbers with $\ell \geq 1$ and consider $W = W(a_0, \dots, a_{n-1}, b_0, \dots, b_{\ell-1})$. From Theorems~\ref{thm: intro_volumegrowthviagrowthrate} and~\ref{thm: mainthmgrowthrateHW}, we directly obtain the following consequence.

\begin{corollary}\label{cor: mainapplicationvolgrowth}
In the above situation, for any compactly supported symplectomorphism $\varphi$ with $\varphi \in [\vartheta^k] \in \pi_0(\Symp^c(\widehat W))$ for some $k \neq 0$, we have $s_n(\varphi) \geq  1$.
\end{corollary}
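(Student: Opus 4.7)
The plan is simply to chain Theorem~\ref{thm: mainthmgrowthrateHW} with Theorem~\ref{thm: intro_volumegrowthviagrowthrate}; essentially all the content has already been proved, and the only non-trivial work is a short topological check on the Milnor fiber $W$. First, I would apply Theorem~\ref{thm: mainthmgrowthrateHW} to the given data: the Milnor fiber $S = W(a_0,\ldots,a_{n-1})$, its involution $\mathcal{R}_{\mathbf m}^{\mathbf a}$ (whose fixed point set is contractible or homotopy equivalent to $S^0$), the component $L_S$ with $\Lambda(L_S) > 0$, and the even exponents $b_0,\ldots,b_{\ell-1}$. This directly produces a contractible admissible Lagrangian $L$ in $W := W(a_0,\ldots,a_{n-1},b_0,\ldots,b_{\ell-1})$ with $\Lambda(L) > 0$.

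Second, I would verify the two standing hypotheses of Theorem~\ref{thm: intro_volumegrowthviagrowthrate} for this $W$. Periodicity of the Reeb flow on $\partial W$ is immediate from the coordinate-wise rotation formula \eqref{eq: Reebflowformular} of Section~\ref{sec: deffiberedtwists}, with period $2\pi \cdot \lcm(a_0,\ldots,a_{n-1},b_0,\ldots,b_{\ell-1})$. The vanishing $\Ho^1_c(\widehat W;\R) = 0$ follows from Milnor's theorem that a Milnor fiber of an isolated hypersurface singularity is homotopy equivalent to a bouquet of spheres of the middle real dimension; writing $d := n+\ell-1\geq 3$ for the complex dimension of $W$, Poincar\'e--Lefschetz duality gives
$$
\Ho^1_c(\widehat W;\R) \cong \Ho^1(W,\partial W;\R) \cong \Ho_{2d-1}(W;\R) = 0,
$$
since $2d-1 \notin \{0,d\}$ for $d\geq 3$.

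Finally, I would feed the Lagrangian $L$ from the first step into Theorem~\ref{thm: intro_volumegrowthviagrowthrate}. Because $\Lambda(L) > 0$, the theorem immediately yields $s_n(\varphi)\geq 1$ for every compactly supported symplectomorphism $\varphi$ with $[\varphi]=[\vartheta^k]\in\pi_0(\Symp^c(\widehat W))$ for some $k\neq 0$, which is exactly the statement of the corollary. There is no real obstacle in this argument; each step is a direct invocation of a pre-established result, with only the above cohomological check added by hand.
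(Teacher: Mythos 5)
Your proof is correct and follows the same route as the paper: the paper states the corollary as an immediate consequence of chaining Theorem~\ref{thm: mainthmgrowthrateHW} (to produce the admissible Lagrangian $L \subset W$ with $\Lambda(L) > 0$) with Theorem~\ref{thm: intro_volumegrowthviagrowthrate}, and offers no further proof. Your explicit verification that $\Ho^1_c(\widehat W;\R)=0$, via Poincar\'e--Lefschetz duality and Milnor's bouquet theorem (with $d=n+\ell-1\geq 3$), is a worthwhile addition that the paper leaves implicit; the only thing to watch is the notational slippage in the corollary's statement, where the subscript in $s_n$ should match the complex dimension $n+\ell-1$ of $W$ rather than the $n$ used for $S$, which you correctly account for.
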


We now give a proof of Theorem~\ref{thm: theorem A}:

\begin{proof}[Proof of Theorem~\ref{thm: theorem A}]
As observed in Example~\ref{ex: k+1222}, the $A_k$-type Milnor fiber $W(k+1, 2, 2, 2)$ fits into the setup of Corollary~\ref{cor: mainapplicationvolgrowth}, regardless of the parity of $k$. Note that any Milnor fiber $W(a_0, \dots, a_n)$ such that
\begin{itemize}
\item at least three $a_j$ are equal to 2, and
\item at most one $a_j$ is odd
\end{itemize}
is of the form $W(k+1, 2, 2, 2, b_0, \dots, b_{\ell-1})$ for $\ell \geq 0$ (no $b_j$ when $\ell = 0$). Therefore Corollary~\ref{cor: mainapplicationvolgrowth} yields the proof.
\end{proof}

\begin{remark}\label{rem: 34222}
Consider the Milnor fiber $V(3, 4, 2, 2, 2, 2)$ which fulfills the assumptions of Theorem~\ref{thm: theorem A} (and is not of $A_k$-type).  A direct computation shows that $\Delta(1) = 1$ where $\Delta$ denotes the characteristic polynomial of the singularity $z_0^3 + z_1^4 + z_2^2 + z_3^2+ z_4^2 + z_5^2$, see e.g. \cite[Theorem~9.1]{Mil68}. This implies that the link of the singularity is a topological sphere by \cite[Theorem~8.5]{Mil68}. As observed in \cite[Section~3.1]{KauKry05}, together with \cite[Section~4c]{Se00}, it follows that the variation operator $\text{Var}(\vartheta)$ of a fibered twist on $V(3, 4, 2, 2, 2, 2)$ is trivial, and $\vartheta$ acts trivially on homology. Therefore in this case the uniform lower bound from Theorem~\ref{thm: theorem A} can be seen as a truly symplectic result.
\end{remark}

\begin{remark}\label{rem: infinite order}
The positivity of the linear growth rate of an admissible Lagrangian implies that the component of fibered twists has infinite order in $\pi_0(\Symp^c(\widehat W))$; see \cite[Theorem B and Remark 5.1]{KKL18}. It therefore follows from Theorem \ref{thm: mainthmgrowthrateHW} and  the above proof that the component of fibered twists in Theorem~\ref{thm: theorem A} has infinite order in $\pi_0  (\Symp^c(V(\mathbf a)))$. A more general result for weighted homogeneous polynomials was shown by Seidel in \cite[Section 4c]{Se00} using different methods.
\end{remark}

\subsection*{Acknowledgement}
The authors cordially thank Felix Schlenk and Otto van Koert for valuable comments, the anonymous referee for kind suggestions.
Joontae Kim was supported by the National Research Foundation of Korea(NRF) grant funded by the Korea government(MSIT) (No. NRF-2022R1F1A1074587) and by the Sogang University Research Grant of 202310013.01.
Myeonggi Kwon was supported by the National Research Foundation of Korea(NRF) grant funded by the Korea government(MSIT) (No. NRF-2021R1F1A1060118).

\bibliographystyle{abbrv}
\bibliography{mybibfile}

\end{document}